\newcommand{\C}{\mathbb{C}}
\newcommand{\R}{\mathbb{R}}
\newcommand{\W}{\mathbb{W}}
\newcommand{\one}{\boldsymbol{1}}
\newcommand{\zero}{\boldsymbol{0}}
\newcommand{\rat}{\mathcal{Q}}
\newcommand{\barbarvec}[1]{\bar{\bar {\vec{#1}}}} 
\newcommand{\barvec}[1]{\bar {\vec{#1}}}	
\DeclareMathOperator{\vspan}{span}
\DeclareMathOperator{\range}{range}
\DeclarePairedDelimiter{\norm}{\lVert}{\rVert}	
\DeclarePairedDelimiter{\abs}{\lvert}{\rvert}	
\renewcommand{\vec}{\boldsymbol}
\renewcommand*\env@matrix[1][*\c@MaxMatrixCols c]{%
  \hskip -\arraycolsep
  \let\@ifnextchar\new@ifnextchar
  \array{#1}}
\crefname{hypothesis}{Hypothesis}{Hypotheses}
\title{Rational Krylov methods for fractional diffusion problems on graphs\thanks{Submitted to the editors DATE.
\funding{ -- }}}
\author{Michele Benzi\thanks{Scuola Normale Superiore, Piazza dei Cavalieri 7, 56126 Pisa, Italy 
  (\email{michele.benzi@sns.it}, \email{igor.simunec@sns.it}).}
\and Igor Simunec\footnotemark[2]}
\DeclareMathOperator{\diag}{diag}
\begin{document}

\maketitle

\begin{abstract}
	In this paper we propose a method to compute the solution to the fractional diffusion equation on directed networks, which can be expressed in terms of the graph Laplacian $L$ as a product $f(L^T) \boldsymbol{b}$, where $f$ is a non-analytic function involving fractional powers and $\boldsymbol{b}$ is a given vector. The graph Laplacian is a singular matrix, causing Krylov methods for $f(L^T) \boldsymbol{b}$ to converge more slowly. In order to overcome this difficulty and achieve faster convergence, we use rational Krylov methods applied to a desingularized version of the graph Laplacian, obtained with either a rank-one shift or a projection on a subspace.
\end{abstract}

\begin{keywords}
	network dynamics, graph Laplacian, non-analytic matrix functions, rational Krylov methods, desingularization
\end{keywords}

\begin{AMS}
65F60
\end{AMS}

\section{Introduction}
The use of graph models to represent complex structures is extremely widespread, ranging from real and digital social networks, to transportation networks, to networks of chemical reactions and many others. It is often of interest in applications to study diffusion processes taking place on a network, that evolve in time according to the distribution of edges in the underlying graph. Such a process can be described in terms of a system of ordinary differential equations in time, with the Laplacian matrix $L$ of the graph as the coefficient matrix, and its solution at time $t$ can be written as $\vec u(t) = \exp(-t L^T) \vec u(0)$. An expression of this form can be computed efficiently using a polynomial Krylov method.


While a diffusion process is a local phenomenon, there are certain phenomena that allow long-range interactions and are non-local in nature: in the continuous setting, phenomena of this kind have been effectively modelled using fractional powers of the Laplace differential operator, that is $(-\Delta)^{\alpha}$ for $\alpha \in {[1/2, 1)}$ (see, e.g., \cite[Definition~2]{ILTA06}). Analogously, in the context of directed graphs, these phenomena can be well described with a fractional diffusion model, which employs a fractional power of the graph Laplacian instead of the Laplacian itself. Unlike for the continuous Laplace operator, in the discrete case there is no need to restrict the values of $\alpha$ to the interval ${[1/2, 1)}$, and we can use any exponent $\alpha \in (0,1)$. This modelling approach has been recently studied in \cite{MateosRiascos14} in the undirected case, and in \cite{BBDS20} for more general directed graphs; we refer to the book \cite{MRCNN19} for more information on fractional dynamics. We mention that there are alternative approaches for modelling nonlocal phenomena on graphs, e.g.~the one based on the transformed $k$-path Laplacian presented in~\cite{Estrada17, Estrada18}: this model shares some properties with fractional diffusion, such as superdiffusive behaviour on infinite one-dimensional graphs. 

In this paper we focus on the computational aspect of fractional dynamics, in particular on the efficient computation of the solution to the fractional diffusion equation on both undirected and directed graphs using rational Krylov methods. The solution at time $t \ge 0$ can be expressed as $\vec u(t) = f(L^T) \vec u(0)$, where $f(z) = \exp(-t z^\alpha)$ for $\alpha \in (0,1)$. The function $f$ has a branch cut on the negative real axis $(-\infty, 0]$, and hence it is not analytic in a neighbourhood of the spectrum of the graph Laplacian $L$, which is a singular $M$-matrix. The lack of a region of analyticity around the spectrum of $L$ causes the error bounds for Krylov methods based on the spectrum and the field of values to be unusable, and in practice this also negatively affects the actual convergence rate of these methods. Moreover, since $f$ is not analytic, it is preferable to approximate $f(L^T) \vec u(0)$ using rational Krylov methods instead of polynomial methods. In our experiments, in addition to the polynomial Krylov method, we investigate the performance of the Shift-and-Invert Krylov method with two different shifts, and of a rational Krylov method with asymptotically optimal poles presented in \cite{MasseiRobol20} for Laplace-Stieltjes functions.

To improve convergence, we propose to remove the singularity of the graph Laplacian with either a rank-one shift or a projection on a subspace. This enables us to use any rational Krylov method on the (now nonsingular) modified Laplacian, which exhibit faster convergence, and then recover the solution to the original problem with only little additional cost. We also show that the improved convergence of the projection method can be achieved without explicitly projecting the graph Laplacian, by suitably manipulating the initial vector (see \cref{sec:implicit-projection}). These techniques can be applied with no preliminary computations to any undirected graph, and they only require the solution of the singular linear system $L^T \vec z = \zero$ for a general digraph.

The paper is organized as follows. First, in \cref{sec:background-and-notation} we provide the necessary background and notation on graphs, and we introduce the graph Laplacian $L$. In \cref{sec:fractional-laplacian} we define the fractional powers $L^\alpha$, $\alpha \in (0,1)$, and we briefly discuss the properties of the fractional Laplacian. In \cref{sec:krylov-methods} we introduce rational Krylov methods for the computation of matrix functions, and in \cref{sec:desingularization} we present some techniques to remove the singularity of the graph Laplacian. In \cref{sec:numerical-experiments} we conduct some numerical experiments on real-world networks to compare the performance of different Krylov methods and demostrate the effectiveness of the desingulatization techniques proposed in~\cref{sec:desingularization}.

\section{Background and notation regarding graphs}
\label{sec:background-and-notation}
A \emph{directed graph}, or \emph{digraph}, is a pair $G = (V,E)$, where $V = \{ v_1, \dots, v_n \}$ is a set of $n$ nodes or vertices, and $E \subseteq V \times V$ is the set of edges. A digraph can be represented with its \emph{adjacency matrix} $A$, an $n \times n$ matrix whose entries are
\begin{equation*}
A_{ij} = \begin{cases}
1 & \text{if $(i,j) \in E$,} \\
0 & \text{otherwise.}
\end{cases}
\end{equation*}
The out-degree $d_i$ of a node $i$ is defined as the number of edges going out from $i$, i.e.~edges of the form $(i, \ell) \in E$ for some node $\ell \in V$. The vector $d$ of out-degrees can be computed as $\vec d = A \one$, where $\one$ denotes the all-ones vector of size $n$.
If we denote by $D = \diag(\vec d)$ the diagonal matrix of out-degrees, the (out-degree) \emph{graph Laplacian} of $G$ is defined as $L = D - A$. 

Note that one can also define the vector of in-degrees as $\vec d_\text{in} = A^T \one$, and the in-degree graph Laplacian as $L_\text{in} = \diag(\vec d_\text{in}) - A$. Here, we focus solely on the out-degree graph Laplacian $L$, and we refer to it as the graph Laplacian whenever there is no ambiguity. Most of the properties of $L$ also hold for $L_\text{in}$, and what we say for the out-degree graph Laplacian can be extended to the case of the in-degree Laplacian with only minor adjustments.

One can also consider weighted graphs, where to each edge $(i,j) \in E$ is associated a positive weight $w_{ij}$, repesenting the strength of the connection between nodes $i$ and $j$; if $(i,j) \notin E$, we write $w_{ij} = 0$. The matrix $W = (w_{ij})$ is a weighted adjacency matrix associated to $G$, and it can be used to defined a weighted vector of out-degrees $\vec d_W = W \one$ and a weighted graph Laplacian $L_W = \diag(\vec d_W) - W$. In this paper we only consider unweighted graphs for simplicity, but the techniques that we propose can be applied in the same way to weighted graphs. We also mainly focus on strongly connected graphs, i.e., graphs on which there exists a directed path from node $i$ to node $j$ for any pair of nodes $(i,j)$.
Recall that a graph is strongly connected if and only if its adjacency matrix $A$ (and hence also $L$) is irreducible, i.e., there exists no permutation matrix $P$ such that $P^T A P$ is block triangular.

\subsection{Properties of the graph Laplacian}
Here we briefly discuss some properties of the graph Laplacian $L$, which later will be used in the definition of its fractional powers. We also introduce the classical diffusion equation on graphs.

It follows from its definition that the graph Laplacian $L$ is a singular matrix, indeed it holds $L \one = \zero$. More specifically, the graph Laplacian is a singular $M$-matrix.

\begin{definition}[$M$-matrix \cite{BermanPlemmons87}]
A matrix $A \in \R^{n \times n}$ is an $M$-matrix if it holds $A = s I - B$ for some nonnegative matrix $B$, where $s \ge \rho(B)$, the spectral radius of $B$. It is a singular $M$-matrix if $s = \rho(B)$.
\end{definition}

One can easily prove the following basic result.

\begin{proposition}
\label{prop:laplacian-properties}
The graph Laplacian $L$ of a digraph $G$ has the following properties:
\begin{itemize}
\item
$L$ is a singular $M$-matrix.
\item
The nonzero eigenvalues of $L$ have positive real part.
\item
$0$ is a semisimple eigenvalue of $L$, i.e.~its algebraic multiplicity and geometric multiplicity are the same.
\end{itemize}
\end{proposition}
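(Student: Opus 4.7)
My plan is three-part, one for each bullet, and the three arguments are essentially layered. For the first property I would choose any scalar $s \ge \max_i d_i$ (for instance $s = \max_i d_i$) and write the splitting $L = sI - B$ with $B = sI - D + A$. By the choice of $s$ the matrix $B$ is entrywise nonnegative, and the identity $L \one = \zero$ translates immediately into $B \one = s \one$, so $s$ is an eigenvalue of $B$ with eigenvector $\one$. Since the row sums of $B$ all equal $s$, the estimate $\rho(B) \le \|B\|_\infty = s$ (equivalently, Gershgorin applied row by row) forces $\rho(B) = s$, which is exactly the definition of a singular $M$-matrix.

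For the second property I would combine the splitting above with the spectral mapping identity $\sigma(L) = \{ s - \mu : \mu \in \sigma(B) \}$. Every $\mu \in \sigma(B)$ satisfies $|\mu| \le \rho(B) = s$, hence every $\lambda \in \sigma(L)$ lies in the closed disk $\{ z \in \C : |s - z| \le s \}$. This disk is tangent to the imaginary axis at the origin and otherwise sits strictly in the open right half-plane, so $\mathrm{Re}\,\lambda \ge 0$ with equality only when $\lambda = 0$.

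The main obstacle is the third bullet, semisimplicity of $0$, which is not automatic from the $M$-matrix structure alone (singular $M$-matrices can in general have a nontrivial Jordan block at $0$). My plan is to rescale and pass to $P = B/s$, which is a row-stochastic matrix, so in particular $\|P\|_\infty = 1$ and hence $\|P^k\|_\infty \le 1$ for every $k \in \N$. Semisimplicity of $0$ for $L$ is equivalent to semisimplicity of $1$ for $P$, and I would argue this by contradiction: if it failed, there would exist vectors $\vec v \neq \zero$ and $\vec w$ with $P \vec v = \vec v$ and $P \vec w = \vec w + \vec v$, and an easy induction yields $P^k \vec w = \vec w + k\, \vec v$. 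The $\infty$-norm of this expression grows linearly in $k$, yet $\|P^k \vec w\|_\infty \le \|\vec w\|_\infty$, a contradiction. This boundedness-of-iterates argument is the only non-routine ingredient of the proof, and it applies uniformly to general digraphs (no strong-connectivity assumption is used anywhere).
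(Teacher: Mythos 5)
The paper offers no proof of this proposition (it is dismissed with ``one can easily prove the following basic result''), so there is nothing to compare line by line; your argument is correct and complete, and it is the standard route one would take. The splitting $L = sI - B$ with $s = \max_i d_i$, the observation $B\one = s\one$ together with $\rho(B) \le \norm{B}_\infty = s$, and the resulting inclusion of $\sigma(L)$ in the disk $\abs{s - z} \le s$ all check out, and you correctly identify that the only non-routine point is semisimplicity of $0$: the power-boundedness of the row-stochastic matrix $P = B/s$ ruling out a Jordan chain $P\vec w = \vec w + \vec v$ is exactly the right (and essentially unavoidable) ingredient, and it indeed requires no strong connectivity. The only cosmetic caveat is the degenerate case $s = 0$ (a graph with no edges), where $P$ is undefined but $L = \zero$ and all three claims are trivial; a one-line remark disposes of it.
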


These properties are fundamental for being able to define fractional powers of $L$, as we will see shortly.

The graph Laplacian is used as the coefficient matrix in the diffusion equation on the graph $G$. Denote by $\vec u(t) \in \R^n$ a vector of concentrations at time $t$ of a substance that is diffusing on the graph. Up to normalization, we can assume that $\vec u(t)$ is a probability vector, i.e.~that $\vec u(t) \ge 0$ and $\vec u(t)^T \one = 1$. The diffusion equation on a directed graph reads
\begin{equation}
\label{eqn:diffusion-classical}
\begin{cases}
\dfrac{d}{dt}\vec u(t)^T = - \vec u(t)^T L,& \quad t \in (0, T],\\
\vec u(0) = \vec u_0 \ge 0,& \quad \vec u_0^T \one = 1,
\end{cases}
\end{equation}
and the solution to this system of ordinary differential equations can be explicitly stated in terms of the matrix exponential,
\begin{equation*}
\vec u(t)^T = \vec u_0^T e^{-t L}.
\end{equation*}
Using properties of $M$-matrices, one can easily prove that $e^{-tL}$ is a stochastic matrix, i.e.~that it has nonnegative entries and $e^{-tL} \one = \one$, and hence the solution $\vec u(t)$ is a probability vector at all times $t \ge 0$. Note that this property would not be preserved if we used column vectors instead of row vectors in \cref{eqn:diffusion-classical}: see, e.g., the discussion in~\cite{ChapmanMesbahi11}.

\section{The fractional graph Laplacian}
\label{sec:fractional-laplacian}
In this section, we recall the general definition of a matrix function in terms of the Jordan canonical form, following \cite[Section~1.2]{Higham08}, and we use it to define the fractional graph Laplacian and the related fractional diffusion process.

Recall that any matrix $A \in \C^{n \times n}$ can be expressed in the Jordan canonical form as
\begin{equation}
\label{eqn:Jordan}
\begin{aligned}
Z^{-1} A Z &= J = \diag(J_1, \dots, J_p), \\
J_k =\begin{bmatrix}
	 \lambda_k \\
\end{bmatrix}
\quad \text{or} \quad J_k &= \begin{bmatrix}
	 \lambda_k & 1 & & \\
	   & \lambda_k & \ddots & \\
	   & & \ddots & 1 \\
	   & & & \lambda_k
\end{bmatrix} \in \C^{m_k \times m_k},
\end{aligned}
\end{equation}
where $Z$ is nonsingular and $m_1 + m_2 + \dots + m_p = n$. An eigenvalue $\lambda$ is semisimple if and only if all the Jordan blocks associated to $\lambda$ are $1 \times 1$. We have the following definition.
\begin{definition}
The function $f$ is said to be defined on the spectrum of $A$ if the values
\begin{equation*}
f^{(j)}(\lambda_i), \qquad j = 0, \dots, m_i-1, \quad i = 1, \dots, n,
\end{equation*}
exist, where $f^{(0)} = f$ and $f^{(j)}$ is the $j$-th derivative of $f$.
\end{definition}
Provided that $f$ is defined on the spectrum of $A$, the matrix function $f(A)$ can be defined for any matrix using the Jordan canonical form.
\begin{definition}
\label{defn:matrix-function}
Let $f$ be defined on the spectrum of $A \in \C^{n \times n}$, and let $A$ have the Jordan canonical form \cref{eqn:Jordan}. Then we define
\begin{equation*}
f(A) = Z f(J) Z^{-1} = Z \diag \big(f(J_1), \dots, f(J_p)\big)Z^{-1},
\end{equation*}
where 
\begin{equation*}
f(J_k) = \begin{bmatrix}
	f(\lambda_k)
\end{bmatrix} \quad \text{or} \quad
f(J_k) = \begin{bmatrix}
	f(\lambda_k) & f'(\lambda_k) & \dots & \dfrac{f^{(m_k-1)}(\lambda_k)}{(m_k-1)!} \\
	& f(\lambda_k) & \ddots & \vdots \\
	& & \ddots & f'(\lambda_k) \\ 
	& & & f(\lambda_k)
\end{bmatrix}.
\end{equation*}
\end{definition}
By \cref{prop:laplacian-properties}, the function $f(z) = z^\alpha$, $\alpha \in (0,1)$ is defined on the spectrum of the graph Laplacian $L$, since the eigenvalue $0$ is semisimple and all other eigenvalues are in the right half-plane. Here we denote by $z^\alpha$ the branch of the fractional power with a cut on the negative real axis $(-\infty, 0]$, i.e.~if $z = \rho e^{i \theta}$, with $\rho > 0$ and $\theta \in (-\pi, \pi)$, then $z^\alpha = \rho^\alpha e^{i \alpha \theta}$.

With the above definition, the fractional Laplacian $L^\alpha$ is still an $M$-matrix. Indeed, we have the following result.
\begin{theorem}[\cite{BBDS20}]
If $A$ is a singular $M$-matrix with a semisimple $0$ eigenvalue, then $A^\alpha$ is a singular $M$-matrix for every $\alpha \in (0,1]$.
\end{theorem}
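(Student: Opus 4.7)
The plan is to handle the nonsingular case by a direct Newton binomial-series computation, and then to reduce the singular case to it by a continuity argument that exploits the assumption that $0$ is a semisimple eigenvalue of $A$.

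Write $A = sI - B$ with $B \geq 0$ and $s = \rho(B)$; the case $s=0$ is trivial, so assume $s > 0$. For each $\epsilon > 0$, the matrix $A_\epsilon := A + \epsilon I = (s+\epsilon)I - B$ is a \emph{nonsingular} $M$-matrix, since $s+\epsilon > \rho(B)$, and in particular $\rho(B/(s+\epsilon)) < 1$. The Newton binomial series
\begin{equation*}
A_\epsilon^\alpha = (s+\epsilon)^\alpha \sum_{k=0}^\infty \binom{\alpha}{k}\left(-\frac{B}{s+\epsilon}\right)^k
\end{equation*}
therefore converges absolutely and agrees with the matrix function of \cref{defn:matrix-function}. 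Rearranging gives $A_\epsilon^\alpha = (s+\epsilon)^\alpha I - C_\epsilon$ with $C_\epsilon = \sum_{k\geq 1} c_k^\epsilon B^k$ and coefficients $c_k^\epsilon = (-1)^{k-1}\binom{\alpha}{k}(s+\epsilon)^{\alpha-k}$. A short sign analysis of $\binom{\alpha}{k}$ for $\alpha \in (0,1)$ shows $c_k^\epsilon > 0$ for every $k \geq 1$, so $C_\epsilon \geq 0$ entry-wise. Moreover, bounding eigenvalues of $C_\epsilon$ by $|\mu| \leq s$ for $\mu \in \sigma(B)$ and using the scalar identity $\sum_{k\geq 1}(-1)^{k-1}\binom{\alpha}{k}r^k = 1 - (1-r)^\alpha$ at $r = s/(s+\epsilon)$ yields the uniform bound $\rho(C_\epsilon) \leq (s+\epsilon)^\alpha - \epsilon^\alpha$.

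Now let $\epsilon \to 0^+$. Because $0$ is a semisimple eigenvalue of $A$ and every other eigenvalue has positive real part, $f(z) = z^\alpha$ is defined on the spectrum of $A_\epsilon$ for all $\epsilon \geq 0$ and the map $\epsilon \mapsto A_\epsilon^\alpha$ is continuous at $\epsilon = 0$; hence $A_\epsilon^\alpha \to A^\alpha$ entry-wise. Setting $C := s^\alpha I - A^\alpha = \lim_{\epsilon \to 0^+} C_\epsilon$ preserves $C \geq 0$, and continuity of the spectral radius combined with the uniform bound above preserves $\rho(C) \leq s^\alpha$. The reverse inequality is free: since $A$ has $0$ as an eigenvalue, so does $A^\alpha$, forcing $s^\alpha \in \sigma(C)$ and $\rho(C) \geq s^\alpha$. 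Therefore $A^\alpha = s^\alpha I - C$ with $C \geq 0$ and $\rho(C) = s^\alpha$, i.e., $A^\alpha$ is a singular $M$-matrix. The case $\alpha = 1$ is immediate.

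The delicate step is the passage to the limit. Continuity of $A \mapsto A^\alpha$ at a matrix with a \emph{singular but semisimple} $0$ eigenvalue is precisely what keeps the argument alive: the function $z^\alpha$ has no finite derivative at $0$, so continuity would fail if the $0$ eigenvalue of $A$ had a nontrivial Jordan block, and without continuity one could not transfer the nonnegativity of $C_\epsilon$ and the spectral-radius bound from $A_\epsilon$ to $A$.
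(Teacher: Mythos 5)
The paper does not prove this statement at all --- it is imported verbatim from \cite{BBDS20} --- so there is no in-text proof to compare yours against; I can only assess your argument on its own terms, and it is correct. The structure is sound: the Newton series $\sum_k \binom{\alpha}{k}(-B/(s+\epsilon))^k$ converges because $\rho(B/(s+\epsilon))<1$, the sign pattern $(-1)^{k-1}\binom{\alpha}{k}>0$ for $k\ge 1$ and $\alpha\in(0,1)$ gives $C_\epsilon\ge 0$, the spectral-mapping bound $\rho(C_\epsilon)\le\sum_k c_k^\epsilon s^k=(s+\epsilon)^\alpha-\epsilon^\alpha$ is right, and the final squeeze $\rho(C)=s^\alpha$ via $0\in\sigma(A^\alpha)$ closes the M-matrix definition exactly as the paper states it. The $\epsilon$-regularization is not just convenient but genuinely necessary: at $\epsilon=0$ one sits on the boundary of the disc of convergence, where $\|(B/s)^k\|$ may grow polynomially if $B$ has defective eigenvalues of modulus $s$ other than $s$ itself, so the direct series argument would not obviously go through. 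The one step you should write out rather than assert is the continuity $A_\epsilon^\alpha\to A^\alpha$: it follows in one line from the Jordan form, since semisimplicity of the $0$ eigenvalue gives $Z^{-1}AZ=\operatorname{diag}(0_m,J_1)$ with $\sigma(J_1)$ in the open right half-plane, whence $A_\epsilon^\alpha=Z\operatorname{diag}\bigl(\epsilon^\alpha I_m, (J_1+\epsilon I)^\alpha\bigr)Z^{-1}\to Z\operatorname{diag}\bigl(0, J_1^\alpha\bigr)Z^{-1}=A^\alpha$, the second block converging because $z^\alpha$ is analytic there. (You correctly observe, and should perhaps justify with the same half-line of algebra, that every nonzero eigenvalue $s-\mu$ of $A$ has strictly positive real part since $|\mu|\le s$ forces $\operatorname{Re}(s-\mu)=0$ only when $\mu=s$.) With that line added, the proof is complete and self-contained.
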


Moreover, since $L^\alpha \one = \zero$, we can interpret the fractional graph Laplacian as the Laplacian of a weighted graph on the same set of nodes as $G$, and we can use it to define a fractional diffusion process on $G$, with a system of differential equations analogous to \cref{eqn:diffusion-classical}: 
\begin{equation}
\label{eqn:diffusion-fractional}
\begin{cases}
\dfrac{d}{dt} \vec u(t)^T = -\vec u(t)^T L^\alpha ,& \quad t \in (0, T],\\
\vec u(0) = \vec u_0 \ge 0,& \quad \vec u_0^T \one = 1.
\end{cases}
\end{equation}
The solution of this system can be explicitly written in the form 
\begin{equation}
\label{eqn:diffusion-fractional-solution}
\vec u(t)^T = \vec u_0^T e^{-t L^\alpha}, \qquad t \ge 0.
\end{equation}
As in the case of classical diffusion, the solution $\vec u(t)$ to \cref{eqn:diffusion-fractional} is a probability vector at all times $t \ge 0$. 

\section{Rational Krylov methods}
\label{sec:krylov-methods}
In this section we briefly introduce rational Krylov methods for the computation of expressions of the form $f(A) \vec b$, with the goal of applying them for the computation the solution to the fractional diffusion equation \cref{eqn:diffusion-fractional}, which can be expressed in the form $\vec u(t) = f(L^T) \vec u_0$, where $f(z) = e^{-t z^\alpha}$. For a more extensive treatment of rational Krylov methods, including the problem of the selection of poles, we refer, e.g., to~\cite{GuettelThesis, Guettel13}. 

In many applications it is required to compute the product $f(A) \vec b$, where $A$ is a large and sparse matrix. In these cases, the computation of $f(A) \vec b$ by first computing the whole matrix $f(A)$ and then forming the product $f(A) \vec b$ is extremely expensive and often unfeasible; moreover, the matrix function $f(A)$ is generally dense even when the original matrix $A$ is sparse, making the full computation of $f(A)$ costly also in terms of storage for large scale problems. A rational Krylov methods overcomes these difficulties by directly approximating the product $f(A) \vec b$ using a low-dimensional search space, without explicitly computing $f(A)$. In each iteration of a rational Krylov method it is required to solve a shifted linear system involving $A$, making the iterations more expensive than those of a polynomial Krylov method, which only requires a matrix-vector product with $A$ at each iteration. However, the increased cost per iteration of a rational method is offset by the often superior approximation properties of rational functions, compared to polynomial approximation, especially for functions that are not analytic.

For any $k \ge 1$, define the \emph{rational Krylov subspace} of order $k$ associated to $A$ and $\vec b$ as
\begin{equation*}
\mathcal{Q}_k(A, \vec b) = q_{k-1}(A)^{-1} \vspan \{ \vec b, A \vec b, \dots, A^{k-1} \vec b \},
\end{equation*}
where $q_{k-1}(z) = \displaystyle\prod_{j = 1}^{k-1} (1 - z/\xi_j)$ is a polynomial identified by the \emph{poles} $(\xi_k)_{k \ge 1}$, which are located in $(\C \cup \infty)\setminus (\sigma(A) \cup \{0\})$. 
If all poles are equal to $\infty$, the rational Krylov subspace $\mathcal{Q}_k(A, \vec b)$ reduces to the \emph{polynomial Krylov subspace}
\begin{equation*}
\mathcal{P}_k(A, \vec b) = \vspan\{ \vec b, A \vec b, \dots, A^{k-1}\vec b\} = \{ p_{k-1}(A) \vec b \,:\, p \in \Pi_{k-1} \},
\end{equation*}
where $\Pi_{k-1}$ denotes the set of polynomials of degree $\le k-1$.

The rational Krylov subspaces $\mathcal{Q}_k(A,\vec b)$ form a sequence of nested subspaces, each of dimension $k$, as long as $k \le K$, where $K$ is the invariance index of the sequence, i.e.~the smallest index such that $\mathcal{Q}_K(A, \vec b) = \mathcal{Q}_{K+1}(A,\vec b)=\mathcal{Q}_{K+j}(A,\vec b)$ for all $j \ge 0$.

Generally it is assumed that $k \le K$. If this is the case, we can compute an orthonormal basis of $\mathcal{Q}_k(A, \vec b)$ using Ruhe's rational Arnoldi algorithm \cite{Ruhe94}, which is summarized in \cref{alg:Arnoldi}. The first basis vector is chosen as $\vec v_1 = \vec b/\norm{ \vec b}_2$. After $j$ iterations, the columns of the matrix $V_j = [\vec v_1, \dots, \vec v_j] \in \C^{n\times j}$ form an orthonormal basis of $\mathcal{Q}_j(A, \vec b)$. To construct a basis of $\mathcal{Q}_{j+1}(A, \vec b)$, the Arnoldi algorithm makes use of a \emph{continuation vector} $\vec w_j \in \mathcal{Q}_{j}(A, \vec b)$ such that $(A-\xi_{j+1}I)^{-1} A \vec w_j \in \mathcal{Q}_{j+1}(A, \vec b) \setminus \mathcal{Q}_j(A, \vec b)$. As is customary, we assume that the last computed basis vector $ \vec v_j$ is a continuation vector, and we compute $ \vec v_{j+1}$ by orthonormalizing $\vec w_j = (I - A/\xi_j)^{-1}A \vec v_j$ against all the previous basis vectors.

\begin{algorithm}
\hspace*{\algorithmicindent} \textbf{Input}: $A$, $\vec b$, $k$, $\{\xi_1, \dots, \xi_k\}$ \\
\hspace*{\algorithmicindent} \textbf{Output}: $\{\vec v_1, \dots, \vec v_k\}$ 
\begin{algorithmic}[1]
\State $\vec v_1 \gets \vec b / \norm{\vec b}_2$
\For{$j = 1, \dots, k-1$}
	\State $\vec w_j \gets (I - A/\xi_j)^{-1}A \vec v_j$
    \For{$i = 1, \dots, j$}
		\State $h_{ij} \gets \vec w_j^T \vec v_i$
		\State $\vec w_j \gets \vec w_j - h_{ij} \vec v_j$
	\EndFor
	\State $h_{j+1, j} \gets \norm{\vec w_j}_2$
	\If{$h_{j+1, j} = 0$} 
		 stop
	\Else
		\State $\vec v_{j+1} \gets \vec w_j/h_{j+1, j}$
	\EndIf
\EndFor
\end{algorithmic}
\caption{Rational Arnoldi algorithm for computing an orthonormal basis of $\rat_k(A,b)$.}
\label{alg:Arnoldi}
\end{algorithm}

An approximation to $\vec y = f(A) \vec b$ in the subspace $\mathcal{Q}_k(A,\vec b)$ can be computed as
\begin{equation}
\label{eqn:rational-Krylov-approx}
\barvec y_k = V_k f(V_k^* A V_k) V_k^* \vec b.
\end{equation}
For the solution of the fractional diffusion problem, we are going to use real poles (in particular, located on $(-\infty, 0)$), so the matrices $V_k$ are going to be real.

When the sequence of poles $(\xi_k)_{k \ge 1}$ consists of a single repeated pole $\xi$, the rational Krylov subspaces that are generated are known as \emph{Shift-and-Invert Krylov subspaces}, and they can be written more simply as
\begin{equation*}
\mathcal{S}_k(A,\vec b) := (A - \xi I)^{-k+1} \vspan\{ \vec b, \dots, A^{k-1} \vec b \} = \mathcal{K}_k((A - \xi I)^{-1}, \vec b),
\end{equation*}
i.e.~it corresponds to a polynomial Krylov subspace relative to the matrix $(A - \xi I)^{-1}$.

The Shift-and-Invert method was first investigated for the approximation of matrix functions in~\cite{MoretNovati04, VDEHochbruck06}. 
Even though this type of Krylov subspace sacrifices some flexibility in the choice of the poles, it is appealing because at each iteration we are required to solve a linear system with the same matrix $(A - \xi I)$; this allows us, for instance, to compute an LU factorization of $(A - \xi I)$ only once, and then we can apply it at each iteration to solve the linear systems at a reduced cost. Therefore, although a Shift-and-Invert method will typically require more iterations to converge compared to a rational Krylov method with a carefully chosen sequence of poles, it can still be competitive in terms of execution time.


The effectiveness of the approximation to $f(A) \vec b$ given by \cref{eqn:rational-Krylov-approx} can be related to the problem of approximating $f$ with rational functions. Denote by $\W(A)$ the \emph{field of values} of $A$ \cite[Definition~1.1.1]{HornJohnson91}, i.e.~the set
\begin{equation*}
\W(A) = \{ \vec x^*A \vec x : \vec x \in \C^n,\, \norm{\vec x}_2 = 1 \}. 
\end{equation*}
The field of values, also known as numerical range, is a convex and compact set which contains the spectrum $\sigma(A)$, and it reduces to the convex hull of $\sigma(A)$ when $A$ is a normal matrix. 

The following theorem by Crouzeix and Palencia provides a bound for the norm of a matrix function using the field of values $\W(A)$.
\begin{theorem}[\cite{Crouzeix07, CrouzeixPalencia17}]
\label{thm:Crouzeix}
Assume that $f$ is analytic in a neighbourhood of $\W(A)$. Then it holds
\begin{equation*}
\norm{f(A)}_2 \le C \norm{f}_{\W(A)}, \qquad C \le 1 + \sqrt{2},
\end{equation*}
where $\norm{f}_E = \sup_{z \in E} \abs{f(z)}$ for any set $E \subset \C$.
\end{theorem}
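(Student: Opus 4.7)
I would follow the strategy of Crouzeix and Palencia. First, since $f$ is analytic on a neighbourhood of the compact convex set $\W(A)$, I can choose a bounded convex domain $\Omega$ with smooth boundary such that $\W(A) \subset \Omega$ and $f$ is analytic on a neighbourhood of $\overline{\Omega}$. By a standard approximation argument it then suffices to prove the bound with $\norm{f}_{\overline{\Omega}}$ on the right-hand side, and after rescaling I may assume $\norm{f}_{\overline{\Omega}} = 1$. The starting point is the Riesz--Dunford representation
\[
f(A) = \frac{1}{2\pi i}\oint_{\partial\Omega} f(\zeta)\,(\zeta I - A)^{-1}\,d\zeta,
\]
which reduces bounding $\norm{f(A)}_2$ to controlling an operator-valued Cauchy transform along $\partial\Omega$.

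The core step is a norm inequality for the linear map $\Phi : g \mapsto g(A)$, viewed as an operator from $H^\infty(\Omega)$ (functions analytic in a neighbourhood of $\overline{\Omega}$, with the sup-norm) into $(\C^{n\times n},\norm{\cdot}_2)$. Following Crouzeix--Palencia, I would decompose $f(A)$ as $h(A) + g(A)^\ast$, where $h$ is analytic on $\overline{\Omega}$ with $\norm{h}_{\overline{\Omega}} \le \norm{f}_{\overline{\Omega}}$, and $g$ is built from the conjugated boundary values of $f$ via the Cauchy transform, so that $g$ is analytic on $\Omega$ with controlled sup-norm. The analytic piece $h(A)$ is bounded trivially by~$1$; the nontrivial content of the theorem is in the adjoint term $g(A)^\ast$.

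The principal obstacle, and the heart of the Crouzeix--Palencia proof, is the estimate $\norm{g(A)}_2 \le \sqrt{2}\,\norm{f}_{\overline{\Omega}}$. I would establish it by rewriting $g(A)$ as a double-layer potential against the matrix kernel $(\zeta I - A)^{-1}$ on $\partial\Omega$, and then exploiting both the convexity of $\Omega$ and the hypothesis $\W(A) \subset \Omega$ to show that an appropriately symmetrised version of this kernel is positive semidefinite. A Cauchy--Schwarz argument applied to the resulting quadratic form, combined with a parallelogram-type identity relating $g$ and $\overline{f}$ on $\partial\Omega$, produces the factor $\sqrt{2}$. Combining with $\norm{h(A)}_2 \le 1$ then yields $\norm{f(A)}_2 \le 1 + \sqrt{2}$, which is the advertised bound. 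The delicate point throughout is the positivity of the symmetrised kernel; this is precisely where the convexity of $\Omega$ and the containment of the numerical range are used in an essential way, and verifying it rigorously is the step I would expect to spend most of the work on.
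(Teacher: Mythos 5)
First, a remark on the comparison itself: the paper does not prove this statement. It is imported verbatim from \cite{Crouzeix07, CrouzeixPalencia17}, so there is no internal proof to measure yours against, and your sketch has to be judged as a reconstruction of the Crouzeix--Palencia argument. You do have the right raw ingredients --- the Cauchy integral representation, the auxiliary function $g$ built from the Cauchy transform of the conjugated boundary values of $f$, and the positivity of the symmetrised double-layer kernel, which is indeed exactly where the convexity of $\Omega$ and the containment $\W(A)\subset\Omega$ enter. However, the way you assemble them contains a circularity that would sink the proof. The step ``the analytic piece $h(A)$ is bounded trivially by $1$'' is precisely the statement being proved: for a non-normal $A$ there is no way to bound $\norm{h(A)}_2$ by $\norm{h}_{\W(A)}$ for a general analytic $h$ without the theorem itself, and with constant $1$ the claim is simply false (for the $2\times 2$ nilpotent Jordan block $J$ and $h(z)=z$ one has $\norm{h(J)}_2 = 1 = 2\norm{h}_{\W(J)}$). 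The same objection applies to the claimed direct estimate $\norm{g(A)}_2 \le \sqrt{2}\,\norm{f}_{\overline\Omega}$: since $g$ is again just an analytic function on $\Omega$, any a priori bound on $\norm{g(A)}_2$ in terms of $\norm{g}_{\overline\Omega}$ presupposes exactly what you are trying to establish.

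What the positive-kernel argument actually delivers is a bound on the \emph{combination}, not on either summand separately: the symmetrised kernel is a positive semidefinite operator-valued density of total mass $2I$, which yields $\norm{f(A)+g(A)^*}_2 \le 2\norm{f}_{\overline\Omega}$, while the scalar version of the same computation gives $\norm{g}_{\overline\Omega} \le \norm{f}_{\overline\Omega}$. The constant $1+\sqrt{2}$ then arises not as a sum of two separate estimates ($1$ for one piece, $\sqrt{2}$ for the other) but as the positive root of the quadratic inequality $N^2 \le 2N+1$ obtained in a final operator-theoretic bootstrap: with $N=\norm{f(A)}_2$, $\norm{f}_{\overline\Omega}=1$, and a maximising unit vector $u$, one writes
\begin{equation*}
N^2 = \bigl\langle (f(A)+g(A)^*)u,\, f(A)u \bigr\rangle - \bigl\langle u,\, g(A)f(A)u \bigr\rangle,
\end{equation*}
bounds the first term by $2N$ using the kernel lemma, and exploits the multiplicativity $g(A)f(A) = (gf)(A)$ together with $\norm{gf}_{\overline\Omega}\le 1$ to control the second term and close the inequality. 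To repair your write-up, replace the decompose-and-bound-separately scheme by this ``bound the sum, then bootstrap'' structure; as written, the two individual bounds you assert cannot be established by the means you describe.
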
 
A conjecture by Crouzeix states that the inequality $\norm{f(A)}_2 \le C \norm{f}_{\W(A)}$ holds with $C = 2$ for any matrix $A$.

With \cref{thm:Crouzeix} one can prove the following.

\begin{proposition}[{\cite[Corollary~3.4]{Guettel13}}]
\label{prop:rational-Krylov-bound}
Let $f$ be analytic in a neighbourhood of $\W(A)$. Then the rational Krylov approximation $\barvec y_k$ defined in~\cref{eqn:rational-Krylov-approx} satisfies
\begin{equation}
\label{eqn:rational-approximation-bound}
\norm{f(A) \vec b - \barvec y_k}_2 \le 2C \norm{\vec b}_2 \min_{p_{k-1} \in \Pi_{k-1}} \norm{f - p_{k-1}/q_{k-1}}_{\W(A)},
\end{equation}
where $\Pi_{k-1}$ denotes the set of polynomials of degree $\le k-1$.
\end{proposition}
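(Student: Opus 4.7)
The plan is to combine an \emph{exactness property} of the rational Arnoldi projection with two applications of the Crouzeix--Palencia bound (\cref{thm:Crouzeix}), and then minimize over admissible rational approximants of $f$.

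First I would establish the exactness property: for every rational function of the form $r(z) = p_{k-1}(z)/q_{k-1}(z)$ with $p_{k-1} \in \Pi_{k-1}$ and $q_{k-1}$ the denominator built from the poles $\xi_1, \dots, \xi_{k-1}$, one has
\begin{equation*}
r(A) \vec b = V_k \, r(V_k^* A V_k) \, V_k^* \vec b.
\end{equation*}
This follows because $r(A)\vec b = q_{k-1}(A)^{-1} p_{k-1}(A) \vec b$ lies in the rational Krylov subspace $\rat_k(A, \vec b)$ by its very definition, while $V_k^* \vec b = \norm{\vec b}_2 \vec e_1$ and the compressed matrix $A_k := V_k^* A V_k$ inherits from the Arnoldi recurrence the property that $V_k q_{k-1}(A_k)^{-1} p_{k-1}(A_k) V_k^* \vec b$ also equals $r(A)\vec b$; this is the standard Galerkin exactness of rational Krylov, stated for instance in the references cited before \cref{prop:rational-Krylov-bound}.

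Given exactness, I would split the error for an arbitrary admissible $r = p_{k-1}/q_{k-1}$ by adding and subtracting $r(A)\vec b = V_k r(A_k) V_k^* \vec b$:
\begin{equation*}
f(A)\vec b - \barvec y_k = \bigl(f(A) - r(A)\bigr)\vec b \;-\; V_k\bigl(f(A_k) - r(A_k)\bigr) V_k^* \vec b.
\end{equation*}
Taking $\norm{\cdot}_2$ and using $\norm{V_k}_2 = \norm{V_k^*}_2 = 1$ gives
\begin{equation*}
\norm{f(A)\vec b - \barvec y_k}_2 \le \bigl(\norm{f(A) - r(A)}_2 + \norm{f(A_k) - r(A_k)}_2\bigr) \norm{\vec b}_2.
\end{equation*}
Applying \cref{thm:Crouzeix} to each term (which requires that $f - r$ be analytic in a neighbourhood of $\W(A)$, true since $f$ is by hypothesis and $r$ is analytic off its poles, which lie outside $\sigma(A)$ and hence outside $\W(A)$ after shrinking the neighbourhood) yields
\begin{equation*}
\norm{f(A)\vec b - \barvec y_k}_2 \le C\bigl(\norm{f - r}_{\W(A)} + \norm{f - r}_{\W(A_k)}\bigr)\norm{\vec b}_2.
\end{equation*}
The final ingredient is the inclusion $\W(A_k) \subseteq \W(A)$, which follows immediately from the fact that $V_k$ has orthonormal columns: any unit vector $\vec y \in \C^k$ gives $\vec y^* A_k \vec y = (V_k \vec y)^* A (V_k \vec y) \in \W(A)$. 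This replaces $\W(A_k)$ by $\W(A)$ in the second supremum, giving the factor $2C$. Minimizing over $p_{k-1} \in \Pi_{k-1}$ produces \cref{eqn:rational-approximation-bound}.

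The main obstacle is the exactness property, since everything else is a routine combination of Crouzeix--Palencia and a monotonicity argument. The delicate point in exactness is that $q_{k-1}(A_k)$ must be invertible, which in turn needs the poles $\xi_j$ to avoid $\sigma(A_k) \subseteq \W(A) \subseteq \W(A)$ — consistent with the standing assumption $\xi_j \notin \sigma(A) \cup \{0\}$ and the placement of $\W(A)$ away from the poles implicit in the analyticity hypothesis on $f$. Since this is a standard fact about rational Arnoldi, I would simply cite \cite{Guettel13} rather than reprove it from scratch.
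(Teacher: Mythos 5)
Your argument is correct and is essentially the same one the paper points to: the paper proves nothing beyond invoking \cref{thm:Crouzeix} and citing \cite{Guettel13}, whose Corollary~3.4 is obtained exactly as you describe, via rational Krylov exactness for $p_{k-1}/q_{k-1}$, the triangle inequality, two applications of the Crouzeix--Palencia bound, and the inclusion $\W(V_k^*AV_k) \subseteq \W(A)$. The only minor imprecision is your claim that the analyticity of $f$ near $\W(A)$ keeps the poles away from $\W(A)$ (it does not by itself guarantee that $q_{k-1}(V_k^*AV_k)$ is invertible, which must be assumed or inherited from the pole placement), but this is the same standing assumption made in the cited reference.
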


The bound given by \cref{prop:rational-Krylov-bound} decays rapidly to zero when $f$ is an entire function (e.g., $f(z) = e^z$), or if it has a large region of analiticity surrounding the field of values of $A$. Unfortunately, in the case of fractional diffusion on graphs, the $0$ eigenvalue of the Laplacian is located on the boundary of the region of analiticity of $f$. Moreover, for most directed graphs the field of values of the Laplacian intersects the negative real axis $(-\infty, 0)$, preventing us from using convergence results based on the field of values. The presence of an eigenvalue at $0$ can also be detrimental in practice for the convergence of Krylov methods. With this motivation in mind, in \cref{sec:desingularization} we propose some techniques to remove the singularity of the graph Laplacian, in order to work with nonsingular matrices and improve the convergence of Krylov methods.

\subsection{Laplace-Stieltjes functions}	
\label{sec:laplace-stieltjes-functions}
The problem of the selection of poles for rational Krylov methods is a highly active area of reseach, and many different choices have been proposed in the literature, depending on the function $f$ and on the spectrum of $A$ (see, for instance,~\cite{Guettel13}). Most of the existing analysis deals with real symmetric matrices, since in that case the field of values is reduced to an interval on the real line, and hence the minimization problem~\cref{eqn:rational-approximation-bound} becomes easier to handle. 

A pole selection strategy for the evaluation of $f(A) \vec b$ was recently proposed in~\cite{MasseiRobol20} for the case of a Hermitian positive definite matrix $A$ and a Cauchy-Stieltjes or Laplace-Stieltjes function $f$. 
For a matrix $A$ with spectrum contained in the positive interval $[a,b]$, the choice of poles described in~\cite{MasseiRobol20} gives after $k$ iterations an error
\begin{equation}
\label{eqn:laplace-stieltjes-convergence-rate}
\norm{f(A)\vec b - \barvec y_k}_2 \sim O\big(\rho_{[a,b]}^{\frac{k}{2}}\big), \qquad \text{where} \quad \rho_{[a,b]} = \exp \big(-\pi^2/\log(\tfrac{4b}{a}) \big).
\end{equation}
However, the poles that satisfy the error bound for iteration $k+1$ are not obtained by adding a new pole to the ones of iteration $k$, so in order to effectively use this pole selection strategy one would need to decide a priori the number of iterations to be performed. In order to overcome this drawback, in~\cite[Section~3.5]{MasseiRobol20} the authors use the method of equidistributed sequences (EDS) to construct an infinite sequence of poles with the same asymptotic rate of convergence, that can be more easily used in practice. For the details on the construction of this pole sequence, we refer to the discussion in~\cite[Section~3.5]{MasseiRobol20}.

In this section we observe that the function $f(z) = e^{-t z^\alpha}$ is a Laplace-Stieltjes function, and hence we can use the pole sequence proposed in \cite{MasseiRobol20} for the fractional diffusion problem \cref{eqn:diffusion-fractional}. Even though there are no guarantees on the effectiveness of this pole sequence for general matrices, in our numerical experiments we observed that it provides a good convergence rate even when $A$ is the (singular and nonsymmetric) Laplacian of a directed graph.

\begin{definition}
\label{defn:laplace-stieltjes-function}
A function $f: (0, \infty) \to \R$ is a Laplace-Stieltjes function if it can be expressed in the form
\begin{equation*}
f(z) = \int_0^\infty e^{-t z} d\mu(t),
\end{equation*}
where $\mu$ is a positive measure on $[0, \infty)$.
\end{definition}

The class of Laplace-Stieltjes functions coincides with the class of \emph{completely monotonic functions}, i.e.~infinitely differentiable functions defined on $(0,\infty)$ such that 
\begin{equation}
\label{eqn:completely-monotone-condition}
(-1)^k f^{(k)}(z) \ge 0 \qquad \forall\, z > 0 \quad \text{and} \quad k \ge 0.
\end{equation}
The equivalence between these two classes of functions is known as Bernstein's theorem~\cite[Theorem~1.4]{SSV-BernsteinFunctions}. 

A class of functions which is closely related to completely monotonic functions is the class of \emph{Bernstein functions}, that consists of all functions $f : (0, \infty) \to \R$ of class $C^\infty$ such that 
\begin{equation}
\label{eqn:Bernstein-condition}
f(z) \ge 0 \quad \text{and} \quad (-1)^{k-1}f^{(k)} (z) \ge 0, \qquad \forall\, k \ge 1 \quad \text{and} \quad \forall \, z > 0.
\end{equation}
Observe that a nonnegative function $f : (0,\infty) \to \R$ is a Bernstein function if and only if $f'$ is a completely monotonic function. The fractional power $f(z) = z^\alpha$, for $\alpha \in (0,1)$, is an example of a Bernstein function. By \cite[Theorem~3.7]{SSV-BernsteinFunctions}, if $f$ is a positive Bernstein function, then the function $g(z) = e^{-t f(z)}$ is completely monotonic for all $t > 0$. This proves that $g(z) = e^{-t z^\alpha}$ is a completely monotonic (equivalently, Laplace-Stieltjes) function for all $t > 0$ and $\alpha \in (0,1)$. This fact can also be easily proved diredtly by computing the derivatives of $g$ and checking that condition \cref{eqn:completely-monotone-condition} is verified.

\section{Dealing with the singularity}
\label{sec:desingularization}

As we have discussed previously, the functions $f(z) = z^\alpha$ and $g(z) = e^{-t z^\alpha}$ that are involved in fractional dynamics are not analytic at $z=0$. Since the graph Laplacian $L$ always has a zero eigenvalue, the convergence of rational Krylov methods for the computation of $f(L^T) \vec b$ and $g(L^T) \vec b$ may be hindered by the fact that the function has no region of analyticity surrounding the spectrum of $L$.

In this section we propose a rank-one shift and a subspace projection that can be used to transform the graph Laplacian into a nonsingular matrix, and we provide simple formulas that link $f(L)$ and $f(L^T)\vec b$ with functions of the transformed matrix. We are also going to show that Krylov methods directly applied to the singular graph Laplacian can inherit the improved convergence of the projection approach, at least in exact arithmetic, provided that the initial vector $\vec b$ is suitably modified. 

We present these techniques in detail for strongly connected directed graphs. Recall that in this case the eigenvalue $0$ of the Laplacian $L$ is simple.

\subsection{Rank-one shift}


Recall that $L \one = \zero$, and let $\vec z > \zero$ be such that $\vec z^T L = \zero^T$ and $\vec z^T \one = 1$ (the positivity of $\vec z$ is a consequence of the Perron-Frobenius Theorem \cite{Meyer00}). The vector $\vec z$ is uniquely defined by the above identities if the graph $G$ is strongly connected.

The right and left eigenvectors $\one$ and $\vec z$ can be respectively completed to a right and left Jordan basis for $L$ with two matrices $R,\, S \in \C^{n \times (n-1)}$, so that we have
\begin{equation*}
Z^{-1} L Z = J = \begin{bmatrix}
	 0 & 0 \\
	 0 & J_1
\end{bmatrix}, \qquad \text{where } Z = \begin{bmatrix}
	  \one & R \\
\end{bmatrix} \quad \text{and} \quad Z^{-1} = \begin{bmatrix}
	 \vec z^T \\
	 S^T
\end{bmatrix}.
\end{equation*}
The matrix $J_1 \in \C^{(n-1) \times (n-1)}$ contains all the other Jordan blocks of $L$, which correspond to nonzero eigenvalues. 
 
Now, denoting by $\vec e_1 \in \R^n$ the first vector of the canonical basis, observe that $\one \vec z^T = Z \vec e_1 \vec e_1^T Z^{-1}$, and hence for all $\theta \in \R$ we have 
\begin{equation*}
L + \theta \one \vec z^T = Z \begin{bmatrix}
	 \theta & 0 \\
	 0 & J_1
\end{bmatrix}Z^{-1},
\end{equation*}
i.e.~the matrix $L + \theta \one \vec z^T$ has the same spectrum as $L$ except for the eigenvalue $0$, which is replaced by $\theta$. Therefore, using basic properties of matrix functions, for any function $f$ defined on the spectra of $L$ and $L + \theta \one \vec z^T$ it holds
\begin{equation}
\label{eqn:rank-one-shift-identity}
f(L + \theta \one \vec z^T) = Z \begin{bmatrix}
	 f(\theta) & 0 \\
	 0 & f(J_1) 
\end{bmatrix} Z^{-1} = f(L) + [f(\theta) - f(0)] \one \vec z^T.
\end{equation}

Identity \cref{eqn:rank-one-shift-identity} allows us to compute $f(L + \theta \one \vec z^T)$ instead of $f(L)$ and then recover the latter for a minimal cost. For any $\theta > 0$ (e.g.,~$\theta = 1$), the matrix $L + \theta \one \vec z^T$ is nonsingular and all its eigenvalues have strictly positive real part, so we expect Krylov methods to converge faster when $f$ has a branch cut on $(-\infty, 0]$, e.g.~for $f(z) = z^{\alpha}$.

In particular, for fractional diffusion the objective is the computation of $f(L^T)\vec b$ for a probability vector $\vec b$ and $f(z) = e^{-t z^\alpha}$, and identity \cref{eqn:rank-one-shift-identity} becomes 
\begin{equation}
\label{eqn:rank-one-shift-krylov}
f(L^T)\vec b = f(L^T + \theta \vec z \one^T)\vec b + [f(0) - f(\theta)]\vec z.
\end{equation}

When the matrix $L$ is large and sparse and a rational Krylov method is used to approximate $f(L^T + \theta \vec z \one^T)$, it would be preferable to solve shifted linear systems involving the dense matrix $L^T + \theta \vec z \one^T$ without explicitly forming it. This can be done by using the Sherman-Morrison formula: for an invertible matrix $A$ and two vectors $\vec u$, $\vec v$ such that $1 + \vec v^T A^{-1} \vec u \ne 0$, the matrix $A + \vec u \vec v^T$ is invertible and it holds
\begin{equation}
\label{eqn:Sherman-Morrison-formula}
(A + \vec u \vec v^T)^{-1} = A^{-1} - \frac{A^{-1} \vec u \vec v^T A^{-1}}{1 + \vec v^T A^{-1} \vec u}.
\end{equation}
In our setting, for a pole $\xi \in (-\infty, 0)$ the invertibility condition $1 + \one^T (L^T - \xi I)^{-1} \vec z \ne 0$ is always satisfied (since ($L^T - \xi I)^{-1} \ge 0$, being the inverse of a nonsingular $M$-matrix), and identity \cref{eqn:Sherman-Morrison-formula} becomes
\begin{equation}
\label{eqn:Sherman-Morrison-laplacian}
(L^T + \theta \vec z \one^T -\xi I)^{-1} = (L^T - \xi I)^{-1} + \frac{\theta}{\xi(\theta-\xi)} \vec z \one^T.
\end{equation}

\begin{remark}
\label{rem:previous-use-of-sherman-morrison}
We mention that the Sherman-Morrison formula has already been applied in the literature in the context of rational Krylov methods. For instance, in \cite[Section~3.1]{ShankSimoncini13} the authors use the Sherman-Morrison-Woodbury formula in the construction of an ``augmented'' Krylov subspace associated to a singular matrix, arising in connection with the solution of a constrained Sylvester equation. 
\end{remark}

\begin{remark}
\label{rem:Sherman-Morrison-small-xi}
Using the Jordan canonical form, it is not difficult to see that it holds 
\begin{equation}
\label{eqn:small-xi-limit}
\lim_{\xi \to 0^{-}} \xi (L^T -\xi I)^{-1} = - \vec z \one^T,
\end{equation}
and hence for small $\xi < 0$ and any vector $\vec w$ the identity \cref{eqn:Sherman-Morrison-laplacian} becomes 
\begin{align}
\label{eqn:Sherman-Morrison-cancellation}
(L^T + \theta \vec z \one^T -\xi I)^{-1} \vec w &= (L^T - \xi I)^{-1}\vec w + \frac{\theta}{\xi(\theta-\xi)} (\one^T \vec w) \vec z \\
\nonumber
&\approx -\xi^{-1} (\one^T \vec w) \vec z + \frac{\theta}{\xi(\theta-\xi)} (\one^T \vec w) \vec z, 
\end{align}
i.e. we are very close to subtracting two multiples of the vector $\vec z$ of approximately the same length. Hence formula \cref{eqn:Sherman-Morrison-cancellation} may suffer from severe numerical cancellation when the pole $\xi$ is very close to the origin, and therefore its use is not advised in that situation. Indeed, in our numerical experiments we observed a large loss of precision when solving linear systems with formula \cref{eqn:Sherman-Morrison-cancellation} for poles $\xi < 0$ of order $10^{-6}$.
\end{remark}

In order to address the issue mentioned in \cref{rem:Sherman-Morrison-small-xi}, we now derive an alternative way to compute the solution of the shifted linear system $(L^T - \xi I ) \vec \phi = \vec w$, in order to avoid the cancellation in \cref{eqn:Sherman-Morrison-cancellation} when $\xi$ is small. We have 
\begin{equation*}
\one^T \vec w = \one^T (L^T - \xi I) \vec \phi = -\xi \, \one^T \vec \phi \:\implies\: \one^T \vec \phi = -\xi^{-1} \one^T \vec w,
\end{equation*}
so we can define $\vec \psi := \vec \phi + \xi^{-1} (\one^T \vec w) \vec z$, and it holds by construction that $\one^T \vec \psi = 0$. It is also straightforward to verify that $\vec \psi$ is the solution to the linear system
\begin{equation*}
(L^T - \xi I) \vec \psi = \vec w - (\one^T \vec w) \vec z,
\end{equation*}
and that the vector $\vec w - (\one^T \vec w) \vec z$ is orthogonal to $\one$. Hence, we can compute $\vec \phi$ as
\begin{equation}
\label{eqn:cancellation--phi}
\vec \phi = \vec \psi - \xi^{-1} (\one^T \vec w) \vec z, \qquad \text{where} \quad (L^T - \xi I) \vec \psi = \vec w - (\one^T \vec w)\vec z.
\end{equation}
With formula \cref{eqn:cancellation--phi}, we have explicitly separated a component of the solution that is proportional to $\xi^{-1} \vec z$. By substituting \cref{eqn:cancellation--phi} in \cref{eqn:Sherman-Morrison-cancellation}, we obtain
\begin{equation}
\label{eqn:cancellation--final}
\begin{aligned}
(L^T + \theta \vec z \one^T - \xi I)^{-1} \vec w &= \vec \phi + \frac{\theta}{\xi(\theta - \xi)} (\one^T \vec w) \vec z \\
&= \vec \psi - \xi^{-1} (\one^T \vec w) \vec z + \frac{\theta}{\xi(\theta - \xi)} (\one^T \vec w) \vec z \\
&= \vec \psi + \frac{\one^T \vec w}{\theta - \xi} \, \vec z.
\end{aligned}
\end{equation}
Observe that cancellation is avoided when using \cref{eqn:cancellation--final}, because the subtraction of the two close multiples of the vector $\vec z$ is performed analytically. Moreover, because of \cref{eqn:small-xi-limit} and since $(\vec w - (\one^T \vec w)\vec z) \perp \one$,  we have
\begin{equation*}
\lim_{\xi \to 0^-} \xi (L^T - \xi I)^{-1} (\vec w - (\one^T \vec w)\vec z) = \zero,
\end{equation*}
so for small $\xi < 0$ we do not expect $\vec \psi$ to have a component of order $\xi^{-1}$ along the vector $\vec z$ (note that, in general, this argument fails for $\vec \phi$). Our numerical experiments confirm that the use of formula \cref{eqn:cancellation--final} fixes the problem of cancellation.

\begin{remark}
\label{rem:solution-of-singular-linear-system}
Note that if the graph is undirected, or more generally if it is balanced (i.e.~each node has equal in- and out-degree), we also have $\vec z = \one$ up to a normalization factor, so  no preliminary computation is needed to use this approach with the rank-one shift. On the other hand, for a general digraph it is first required to compute a nonzero vector $\vec z$ such that $L^T \vec z = \zero$. 

The problem of solving this linear system was recently discussed, for instance, in~\cite{BenziFikaMitrouli19}. 
One possible approach is to compute an LDU factorization of the transpose of the graph Laplacian, $L^T = \mathcal{L}\mathcal{D}\mathcal{U}$, where $\mathcal{L}$ is unit lower triangular, $\mathcal{U}$ is unit upper triangular, and $\mathcal{D}$ is diagonal with $\mathcal{D}_{ii} > 0$ for $i = 1, \dots, n-1$ and $\mathcal{D}_{nn} = 0$. Such a factorization always exists since $L$ is an irreducible singular $M$-matrix \cite{BermanPlemmons87}, and it can be computed in a stable way using Gaussian elimination, with no pivoting required~\cite{FunderlicPlemmons81}. The original linear system $L^T \vec z = \zero$ is thus equivalent to $\mathcal{D} \mathcal{U} \vec z = \zero$, which can be solved by fixing $z_n = 1$ and solving the lower triangular linear system $\mathcal{U} \vec z = \vec e_n$ via backward substitution. We remark that $\mathcal{L}^{-1} \ge 0$, so the vector $\vec z$ is nonnegative, and it can be indeed normalized so that $\vec z^T \one = 1$. We also mention that when $L$ is sparse this method can be improved by computing the LDU factorization of $P^T L^T P$ instead of $L^T$, where $P$ is a permutation matrix suitably chosen to reduce the fill-in in the factors $\mathcal{L}$ and $\mathcal{U}$. For example, the Matlab routines \texttt{amd} and \texttt{symrcm} can be used for this purpose.

Alternatively, when $L$ is very large and sparse, the linear system $L^T \vec z = \zero$ can be solved iteratively, e.g.~with a preconditioned GMRES method (see \cite{BenziFikaMitrouli19} and references therein).
Of course, if $L$ is large and we choose to solve $L^T \vec z = \zero$ iteratively, we should also use an iterative method for the solution of the shifted linear systems at each step of the rational Krylov iteration. However, in this paper we do not address this specific subproblem, and we instead focus on the case where it is feasible to solve the linear systems with a direct method.
\end{remark}

\subsection{Projected Krylov methods}

Another way to obtain a nonsingular matrix from the graph Laplacian is to project $L$ on the $n-1$ dimensional subspace $\mathcal{S} = \vspan\{ \one \}^{\perp}$.
We remark that the approach we present here is similar to the one described in \cite[Section 4]{BurrageHaleKay12}, where the authors separate the eigenvalue $0$ from the rest of the spectrum of a symmetric positive semidefinite matrix $A$, to compute $f(A) \vec b$ with an integral on a contour surrounding $\sigma(A) \setminus \{ 0 \}$. See also \cite{IlicTurner04, IlicTurner08} for a discussion of more general \emph{spectral splitting} methods for symmetric matrices.

Let $\{ \vec q_1, \dots, \vec q_{n-1}\}$ be an orthonormal basis of $\mathcal{S}$, and define the $n \times (n-1)$ matrix  $Q := \begin{bmatrix}\vec q_1 &  \!\dots \! & \vec q_{n-1} \end{bmatrix}$. 
The matrix $\tilde Q := \begin{bmatrix}
	 Q & \frac{1}{\sqrt{n}}\one \\
\end{bmatrix}$ is orthogonal, and we have $Q^T Q = I_{n-1}$ and $QQ^T = I_n - \frac{1}{n}\one \one^T$. Here we denoted by $I_k$ the identity matrix of size $k \times k$ in order to stress that the two matrices have a different size; in the sequel, we will drop the subscript when there is no ambiguity. Observe that the matrix $Q^T L Q$ is nonsingular, since $\range Q = \vspan\{\one\}^\perp$, $\ker Q^T = \ker L = \vspan\{\one\}$ and $\range L = \vspan\{ \vec z\}^\perp$. 

We are going to rewrite $f(L)$ in terms of $f(Q^T L Q)$ by using some properties of matrix functions. Recalling that $L \one = \zero$ and that $\one^T Q = \zero^T$, we have:
\begin{equation*}
\tilde Q^T L \tilde Q = \begin{bmatrix}
	 Q^T \\
	 \frac{1}{\sqrt{n}}\one^T
\end{bmatrix}
L
\begin{bmatrix}
	 Q & \frac{1}{\sqrt{n}}\one
\end{bmatrix} = \begin{bmatrix}
	 Q^T L Q & \zero \\
	 \frac{1}{\sqrt{n}}\one^T L Q & 0
\end{bmatrix}.
\end{equation*}
Now, using well known properties of matrix functions, we have 
\begin{equation}
\label{eqn:projected-krylov-partial}
\tilde Q^T f(L) \tilde Q = f(\tilde Q^T L \tilde Q) = 
\begin{bmatrix}
	 f(Q^T L Q) & \zero \\
	 \vec \varphi^T & f(0)
\end{bmatrix},
\end{equation}
for some $\vec \varphi \in \R^{n-1}$. The vector $\vec \varphi$ can be expressed in closed form (see, e.g., \cite[Theorem~1.21]{Higham08}), but this is not necessary for our purposes.

Let us assume at first that our goal is to compute $f(L^T)\vec v$ for a vector $\vec v$ such that $\one^T \vec v = 0$.
Using \cref{eqn:projected-krylov-partial}, we get
\begin{align}
\notag
f(L^T)\vec v &= \tilde Q f(\tilde Q^T L \tilde Q)^T \tilde Q^T \vec v \\
\notag
&= \begin{bmatrix}
	 Q & \frac{1}{\sqrt{n}}\one
	 \end{bmatrix} \begin{bmatrix}
	 f(Q^T L^T Q) & \vec \varphi \\
	 \vec 0^T & f(0)
\end{bmatrix}
\begin{bmatrix}
	 Q^T \vec v \\
	 0
\end{bmatrix}\\
&= Q f(Q^T L^T Q) Q^T \vec v.
\label{eqn:projected-krylov-orthogonal}
\end{align}
Now, consider the computation of $f(L^T)\vec b$ for a generic vector $\vec b$. If $\one^T \vec b = \beta \ne 0$, we can always write $\vec b = \vec v + \beta \vec z$ for some vector $\vec v \perp \one$ (recall that $\vec z$ satisfies $L^T \vec z = \zero$ and $\one^T \vec z = 1$). Hence, using \cref{eqn:projected-krylov-orthogonal} we have
\begin{align}
\label{eqn:projected-krylov-general}
\notag
f(L^T) \vec b &= f(L^T) \vec v + \beta f(L^T) \vec z \\
&= Q f(Q^T L^T Q) Q^T \vec v + \beta f(0) \vec z.
\end{align}

Using \cref{eqn:projected-krylov-general}, we can compute $f(L^T)\vec b$ by using a rational Krylov method on the nonsingular projected matrix $Q^T L^T Q$. As the rank-one shift, this requires knowledge of $\vec z$, the left $0$-eigenvector of the graph Laplacian, which must be computed beforehand by solving the singular linear system $L^T \vec z = \zero$. In order to make this approach viable, we need to be able to compute matrix-vector products with $Q$ efficiently: we address this problem in \cref{sec:mat-vecs-with-Q}. We are also going to show that in exact arithmetic the Krylov methods for $f(L^T)\vec v$ and $Q f(Q^T L^T Q) Q^T \vec v$ construct precisely the same approximations after an equal number of iterations, so it is actually not necessary to perform the projection explicitly.

\subsubsection{Fast matrix-vector products with $Q$}
\label{sec:mat-vecs-with-Q}

In this part we show how to construct a matrix $Q$ with orthonormal columns spanning the subspace $\mathcal{S} = \vspan{\one}^\perp$, such that the matrix-vector products of the form $Q \vec u$ and $Q^T \vec v$ can be perfomed with cost $O(n)$.

Let us define the orthogonal matrix $\tilde Q = \begin{bmatrix}
	 Q & \frac{1}{\sqrt{n}}\one \\
\end{bmatrix}$ as
\begin{equation*}
\tilde Q = \begin{bmatrix}[cccc|c]
	1/\sqrt{n} & \cdots & \cdots & 1/\sqrt{n} & 1/\sqrt{n} \\
	\cmidrule(lr){1-5}
	    r      &   s    & \dots & s & 1/\sqrt{n} \\
	    s      &   r    & \ddots & \vdots & \vdots  \\
	    \vdots & \ddots & \ddots & s & \vdots \\
	    s      & \dots  & s & r & 1/\sqrt{n} \\
\end{bmatrix}, \qquad \begin{aligned} \\
\text{where} \qquad s &= \frac{1}{1-n}\Big(1 + \frac{1}{\sqrt{n}}\Big), \\
r &= s+1
\end{aligned}
\end{equation*} 
so that we have
\begin{equation}
\label{eqn:projection-Q}
Q = \frac{1}{\sqrt n} \begin{bmatrix}
	 1 \\
	 \zero_{n-1}
\end{bmatrix} \one_{n-1}^T + s \begin{bmatrix}
	 0 \\
	 \one_{n-1}
\end{bmatrix} \one_{n-1}^T + \begin{bmatrix}
	 \zero_{n-1}^T \\
	  I_{n-1}
\end{bmatrix},
\end{equation} 
where we denoted by $\zero_{n-1}$, $\one_{n-1}$ and $I_{n-1}$ respectively the all-zeroes vector, the all-ones vector, and the identity matrix of size $n-1$. It is straightforward to see that with the above definition $\tilde Q$ is indeed an orthogonal matrix.
Now, for any vector $\vec v \in \R^n$ and $\vec u \in \R^{n-1}$ we have
\begin{align}
\nonumber
&Q \vec u = \frac{1}{\sqrt n} \one_{n-1}^T \vec u \begin{bmatrix}
	 1 \\
	 0_{n-1}
\end{bmatrix}  + s \one_{n-1}^T \vec u \begin{bmatrix}
	 0 \\
	 \one_{n-1}
\end{bmatrix}  + \begin{bmatrix}
	 0 \\
	 \vec u
\end{bmatrix} \\
\label{eqn:projection-operations}
&Q^T \vec v = \frac{1}{\sqrt n} [\vec v]_1 \one_{n-1} + s \sum_{j = 2}^n [\vec v]_j \one_{n-1} + \begin{bmatrix}
	 v_2 \\
	 \vdots \\
	 v_n
\end{bmatrix}\\
\nonumber
&(Q^T L^T Q - \xi I_{n-1})^{-1} \vec u = Q^T (L^T - \xi I)^{-1} Q \vec u.
\end{align}
The last equality in \cref{eqn:projection-operations} follows from \cref{lemma:projected-kylov-properties}(b).

Hence the matrix-vector products $Q \vec u$ and $Q^T \vec v$ can be computed with cost $O(n)$, and the solution of a shifted linear system with $Q^T L^T Q$ can be reduced with cost $O(n)$ to the solution of a  shifted linear system with $L^T$. 

\subsubsection{Implicit projection}
\label{sec:implicit-projection}

In the following part, we are going to examine how rational Krylov methods for the computation of $f(L^T) \vec b$ are related to their projected counterpart, i.e.~to methods that first approximate $f(Q^T L^TQ) Q^T \vec b$ using rational Krylov subspaces and then use \eqref{eqn:projected-krylov-orthogonal} to compute $f(L^T) \vec b$, in the case of an initial vector $\vec b \perp \one$. Note that the assumption $\vec b \perp \one$ is not satisfied when computing the solution to the fractional diffusion equation, since in that case the initial vector $\vec u_0$ is a probability vector, and hence $\one^T \vec u_0 = 1$. However, with the same procedure used in identity~\cref{eqn:projected-krylov-general}, the results of this section can be used with minor modifications for any initial vector~$\vec b$.

We are going to prove our result in a slightly more general scenario: let $A \in \R^{n \times n}$, and let $\vec x$ be a left eigenvector of $A$ such that $\vec x^T A = \lambda \vec x^T$. The specific case of the graph Laplacian will then correspond to $A = L^T$, $\vec x = \one$ and $\lambda = 0$. Let $Q$ be an $n \times (n-1)$ matrix whose columns are an orthonormal basis of $\vspan\{\vec x\}^\perp$. If $\vec b \perp \vec x$, the same argument used in the proof of \cref{eqn:projected-krylov-orthogonal} gives us 
\begin{equation}
\label{eqn:projected-krylov-implicit}
f(A) \vec b = Q f(Q^TAQ) Q^T \vec b.
\end{equation}

Recall that the usual rational Arnoldi algorithm for $f(A)\vec b$ computes an orthonormal sequence $\left\{ \vec v_k \right\}_{k \ge 1}$ such that $\vec v_1 = \vec b/\norm{\vec b}_2$ and $\vspan\left\{\vec v_1, \dots, \vec v_k\right\} = \rat_k(A, \vec b)$. If we define $V_k = \left[\vec v_1\, \dots \, \vec v_k\right]$, and $B_k = V_k^T A V_k$, a rational Krylov method then yields the approximation
\begin{equation}
\label{eqn:krylov-approx--no-proj}
\bar {\vec y}_k := V_k f(B_k) V_k^T \vec b.
\end{equation}

Alternatively, if we work with the right hand side of \cref{eqn:projected-krylov-implicit}, after $k$ iterations the rational Arnoldi algorithm constructs the matrix $U_k = \left[\vec u_1 \, \dots \, \vec u_k\right] \in \R^{(n-1) \times k}$, whose columns $\{ \vec u_1, \dots, \vec u_k \}$ are an orthonormal basis for $\rat_k(Q^T A Q, Q^T \vec b)$. Then the vector $f(Q^T A Q) Q^T \vec b$ can be approximated by 
\begin{equation*}
U_k f(C_k) U_k^T Q^T \vec b,
\end{equation*}
where $C_k = U_k^T (Q^TAQ) U_k$. Applying now \cref{eqn:projected-krylov-implicit}, we have the following approximation to $f(A)\vec b$: 
\begin{equation}
\label{eqn:krylov-approx--proj}
\barbarvec y_k = Q U_k f(C_k) U_k^T Q^T \vec b.
\end{equation}
We will refer to the method described by equation \cref{eqn:krylov-approx--proj} as a \textit{projected rational Krylov method}.

The main result of this section is the following. 

\begin{theorem}
\label{thm:projected-krylov-equiv}
Let $\barvec y_k$ and $\barbarvec y_k$ be the approximations to $f(A) \vec b$ defined in \cref{eqn:krylov-approx--no-proj} and \cref{eqn:krylov-approx--proj}, respectively, where $\vec b \perp \vec x$. Then, in exact arithmetic it holds that $\barvec y_k = \barbarvec y_k$.
\end{theorem}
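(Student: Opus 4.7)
The plan is to prove that the Arnoldi bases satisfy $V_k = Q U_k$. Once this is established, $B_k = V_k^T A V_k = U_k^T Q^T A Q U_k = C_k$ and $V_k^T \vec b = U_k^T Q^T \vec b$, so substituting into the formulas \cref{eqn:krylov-approx--no-proj} and \cref{eqn:krylov-approx--proj} yields
\begin{equation*}
\barvec y_k = V_k f(B_k) V_k^T \vec b = Q U_k f(C_k) U_k^T Q^T \vec b = \barbarvec y_k.
\end{equation*}

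The first step is to show that $\rat_k(A, \vec b) \subseteq \range Q = \vspan\{\vec x\}^\perp$. Any element of $\rat_k(A, \vec b)$ has the form $q_{k-1}(A)^{-1} p(A) \vec b$ with $p \in \Pi_{k-1}$; since $\vec x^T A = \lambda \vec x^T$ and $q_{k-1}(\lambda) \ne 0$ (the poles avoid $\sigma(A) \ni \lambda$), we obtain
\begin{equation*}
\vec x^T q_{k-1}(A)^{-1} p(A) \vec b = q_{k-1}(\lambda)^{-1} p(\lambda)\, \vec x^T \vec b = 0
\end{equation*}
by the hypothesis $\vec b \perp \vec x$. Consequently every Arnoldi vector $\vec v_j$ satisfies $\vec v_j = Q Q^T \vec v_j$, and the map $\vec v \mapsto Q^T \vec v$ is a linear isometry on $\range Q$.

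The second step is an induction showing $\vec v_j = Q \vec u_j$, where $\vec u_j$ are the Arnoldi vectors generated for $(Q^T A Q, Q^T \vec b)$ with the same pole sequence. For $j = 1$, $\norm{Q^T \vec b}_2 = \norm{\vec b}_2$ because $\vec b \in \range Q$, so $Q^T \vec v_1 = Q^T \vec b/\norm{\vec b}_2 = \vec u_1$. For the inductive step, the continuation vector $\vec w_j = (I - A/\xi_j)^{-1} A \vec v_j$ lies in $\rat_{j+1}(A, \vec b) \subseteq \range Q$, so left-multiplying the equivalent identity $(\xi_j I - A) \vec w_j = \xi_j A \vec v_j$ by $Q^T$ and using $\vec v_j = Q(Q^T \vec v_j)$ together with $\vec w_j = Q(Q^T \vec w_j)$ produces
\begin{equation*}
(\xi_j I - Q^T A Q)(Q^T \vec w_j) = \xi_j (Q^T A Q)(Q^T \vec v_j),
\end{equation*}
which is exactly the continuation step of Arnoldi applied to $(Q^T A Q, Q^T \vec b)$ starting from $\vec u_j = Q^T \vec v_j$. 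Since all vectors involved lie in $\range Q$ and $Q^T$ is an isometry there, the orthogonalization coefficients $h_{ij} = \vec w_j^T \vec v_i = (Q^T \vec w_j)^T (Q^T \vec v_i)$ and the subdiagonal norms $h_{j+1,j}$ coincide with those produced by the projected Arnoldi process, so $\vec u_{j+1} = Q^T \vec v_{j+1}$ and $\vec v_{j+1} = Q \vec u_{j+1}$, closing the induction.

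The delicate point I expect to be the main obstacle is the rational continuation step: one must verify that applying the shifted inverse $(I - A/\xi_j)^{-1}$ and then projecting with $Q^T$ yields the analogous operator for $Q^T A Q$. This is not automatic from abstract operator manipulations, since $A$ need not leave $\range Q$ invariant, but it follows cleanly from the subspace containment established in the first step, which makes both $A\vec v_j$'s shifted preimage and the iterate itself available as elements of $\range Q$ on which $Q Q^T$ acts as the identity.
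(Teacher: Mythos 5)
Your proposal is correct and follows essentially the same route as the paper: both rest on the facts that $\rat_k(A,\vec b)$ is orthogonal to $\vec x$ (so $QQ^T$ acts as the identity there) and that the shifted resolvent commutes with the projection, and both then induct on the Arnoldi vectors to identify the two bases. The only difference is that by tracking the recurrence of the rational Arnoldi algorithm concretely (orthogonalization coefficients and normalization constants included) you obtain $U_k = Q^T V_k$ exactly, whereas the paper's subspace-level argument yields $U_k = Q^T V_k D_k$ for a diagonal sign matrix $D_k$ that it carries through the final computation.
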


We start by proving a few basic properties that we will use repeatedly in the following discussion. 

\begin{lemma}
\label{lemma:projected-kylov-properties}
Using the same notation as above, the following properties hold:
\begin{enumerate}[label=\normalfont{(\alph*)}]
\item
$\rat_k(A, \vec b) \perp \vec x$ and $QQ^T \vec v = \vec v$ for all $\vec v \in \rat_k(A, \vec b)$.
\item
$(Q^T A Q - \xi I)^{-1} = Q^T (A - \xi I)^{-1} Q$ for any $\xi \in \C$ such that the inverses exist.
\item
$\rat_k(Q^T A Q, Q^T \vec b) = Q^T \rat_k(A, \vec b)$ and $ Q \rat_k(Q^T A Q, Q^T \vec b) = \rat_k(A, \vec b)$.
\item
Let $\vec w_k \in \rat_k(A, \vec b)$ and $\vec z_k = Q^T \vec w_k \in \rat_k(Q^TAQ, Q^T \vec b)$. It holds
\begin{equation*}
\hspace{-3mm}
(A - \xi_k I)^{-1}A \vec w_k \!\in\! \rat_{k}(A, \vec b) \!\iff\! (Q^TAQ - \xi_k I)^{-1}Q^TAQ \vec z_k \!\in\! \rat_{k}(Q^TAQ,Q^T \vec b).
\end{equation*}
This implies that $\vec w_k$ is a continuation vector for $\rat_k(A, \vec b)$ if and only if $\vec z_k$ is a continuation vector for $\rat_k(Q^T A Q, Q^T \vec b)$.
\end{enumerate}
\end{lemma}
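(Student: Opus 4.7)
The unifying observation is that $\vec x$ is a \emph{left} eigenvector of every rational function $r(A)$ whose poles avoid $\lambda$: from $\vec x^T A = \lambda \vec x^T$ we get $\vec x^T(A-\xi I)^{-1} = (\lambda-\xi)^{-1}\vec x^T$, and by algebraic combination $\vec x^T r(A) = r(\lambda)\vec x^T$. This immediately yields (a), since every element of $\rat_k(A,\vec b)$ has the form $r(A)\vec b$ for some rational $r$ with the prescribed poles, and $\vec x^T r(A)\vec b = r(\lambda)\vec x^T\vec b = 0$. The second half of (a), namely $QQ^T\vec v=\vec v$ for $\vec v\in\rat_k(A,\vec b)$, is then just the fact that $QQ^T$ is the orthogonal projector onto $\vspan\{\vec x\}^\perp$.

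For (b), the key step is showing that $(A-\xi I)^{-1}$ leaves $\vspan\{\vec x\}^\perp$ invariant: for any $\vec u\in\R^{n-1}$, $\vec x^T(A-\xi I)^{-1}Q\vec u = (\lambda-\xi)^{-1}\vec x^T Q\vec u = 0$, so $(A-\xi I)^{-1}Q\vec u\perp\vec x$ and hence $QQ^T(A-\xi I)^{-1}Q\vec u = (A-\xi I)^{-1}Q\vec u$. Then
\begin{equation*}
Q^T(A-\xi I)Q\cdot Q^T(A-\xi I)^{-1}Q\vec u = Q^T(A-\xi I)(A-\xi I)^{-1}Q\vec u = Q^TQ\vec u = \vec u,
\end{equation*}
and since $Q^T(A-\xi I)Q = Q^TAQ - \xi I_{n-1}$, identity (b) follows. (A symmetric argument on the other side, or a rank count, confirms $Q^TAQ-\xi I$ is invertible whenever $A-\xi I$ is.)

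From (b) plus the analogous identity $AQ = Q(Q^TAQ)$ (which holds because $A$ preserves $\vspan\{\vec x\}^\perp$, by the same left-eigenvector argument applied to $A$ itself), one obtains by composition the intertwining relation $r(A)Q = Q\,r(Q^TAQ)$ for every rational function $r$ with the admissible poles. Combined with $\vec b = QQ^T\vec b$ (which uses $\vec b\perp\vec x$), this gives $r(A)\vec b = Q\,r(Q^TAQ)Q^T\vec b$, and letting $r$ range over all rationals of the relevant form yields $\rat_k(A,\vec b) = Q\,\rat_k(Q^TAQ,Q^T\vec b)$. Left-multiplying by $Q^T$ and using $Q^TQ = I_{n-1}$ gives the first equality of (c).

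Finally, (d) is a direct corollary of the machinery in (c). Since $\vec w_k\perp\vec x$ by (a), we have $\vec w_k = Q\vec z_k$ with $\vec z_k = Q^T\vec w_k$. Using $AQ=Q\hat A$ and $(A-\xi_k I)^{-1}Q = Q(\hat A-\xi_k I)^{-1}$, where $\hat A := Q^TAQ$, we compute
\begin{equation*}
(A-\xi_k I)^{-1}A\vec w_k \;=\; Q(\hat A-\xi_k I)^{-1}\hat A\vec z_k,
\end{equation*}
and the equivalence in (d) then reduces to the tautology $Q\vec v\in Q\,\rat_k(\hat A,Q^T\vec b)\iff \vec v\in\rat_k(\hat A,Q^T\vec b)$, which holds because $Q$ is injective. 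The continuation-vector consequence follows by taking complements (contrapositive) of the stated equivalence. The only step that requires a small amount of care is keeping track of which rationals are admissible with which pole sets, but this is purely bookkeeping and poses no genuine obstacle.
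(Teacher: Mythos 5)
Your proof is correct and follows essentially the same route as the paper: part (a) via the left-eigenvector relation $\vec x^T r(A) = r(\lambda)\vec x^T$, part (b) by verifying a one-sided inverse using $QQ^T = I - \vec x\vec x^T$ and $\vec x^T Q = \zero^T$, and parts (c)--(d) by propagating these identities through the rational functions defining the Krylov space. The only cosmetic difference is that you package the key facts as intertwining relations $AQ = Q(Q^TAQ)$ and $(A-\xi I)^{-1}Q = Q(Q^TAQ-\xi I)^{-1}$ and compose them, where the paper inserts $QQ^T$ factor by factor; both arguments rest on the same observations, and your closing remark on continuation vectors implicitly uses (as the paper states explicitly) that $(A-\xi_k I)^{-1}A\vec w_k$ always lies in $\rat_{k+1}(A,\vec b)$.
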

\begin{proof}
(a) Let $\vec v \in \rat_k(A, \vec b)$, i.e. $\vec v = q_{k-1}(A)^{-1}p(A) \vec b$. Since it holds that $\vec x^T A = \lambda \vec x^T$, we also have $\vec x^T \vec v = \vec x^T q_{k-1}(A)^{-1}p(A) \vec b = q_{k-1}(\lambda)^{-1}p(\lambda) \vec x^T \vec b = 0$, since $\vec b \perp \vec x$. For the second part of the statement, we have $QQ^T \vec v = \vec v - \vec x \vec x^T\vec v = \vec v$ since $\vec v \perp \vec x$.

(b) Let us show that $(Q^TAQ - \xi_k I)Q^T (A - \xi_k I)^{-1}Q = I$. We have:

\begin{align*}
(Q^TAQ - \xi_k I)Q^T (A - \xi_k I)^{-1} Q &= Q^T(A-\xi_k I) QQ^T (A-\xi_k I)^{-1}Q \\
&= Q^T(A-\xi_k I) (I - \vec x \vec x^T) (A - \xi_k I)^{-1}Q \\
&= I - Q^T(A-\xi_k I) \vec x (\lambda-\xi_k)^{-1} \vec x^T Q = I,
\end{align*}
where in the last two equalities we used $\vec x^T(A - \xi_k)^{-1} = (\lambda - \xi_k)^{-1} \vec x^T$ and $\vec x^T Q = \zero^T$.

(c) Let $\vec v \in \rat_k(Q^T A Q, Q^T \vec b)$, so $\vec v = q_{k-1}(Q^T A Q)^{-1}p(Q^TAQ) Q^T \vec b$, for some polynomial $p$ with $\deg p \le k-1$. Using that $QQ^T = I - \vec x \vec x^T$ and $\vec b \perp \vec x$, we can prove that $p(Q^T A Q) Q^T \vec b = Q^T p(A) \vec b$. Because of (b), we also have
\begin{align*}
(Q^T A Q - \xi_{k-1} I)^{-1} Q^T p(A) \vec b &= Q^T (A - \xi_{k-1}I)^{-1} (I - \vec x \vec x^T) p(A) \vec b \\
&= Q^T (A - \xi_{k-1}I)^{-1} p(A) \vec b,
\end{align*}
since $x^T p(A) \vec b = p(\lambda) \vec x^T \vec b = 0$. Using similar operations on the other factors of $q_{k-1}$, we finally obtain
\begin{equation*}
\vec v = q_{k-1}(Q^T A Q)^{-1}p(Q^TAQ) Q^T \vec b = Q^T q_{k-1}(A)^{-1} p(A) \vec b \in Q^T \rat_k(A, \vec b).
\end{equation*}
The second identity follows from the fact that $QQ^T \vec v = \vec v$ for all $\vec v \in \rat_k(A, \vec b)$.

(d) Using (b) and $\rat_k(A, \vec b) \perp \vec x$, we have the following:
\begin{equation*}
(Q^TAQ - \xi_k I)^{-1}Q^TAQ \vec z_k = Q^T(A - \xi_k I)^{-1} Q Q^T A Q Q^T \vec w_k = Q^T (A - \xi_k I)^{-1} A \vec w_k.
\end{equation*}
Because of (c), it follows that $(A - \xi_k I)^{-1} A \vec w_k \in \rat_{k}(A, \vec b)$ if and only if $Q^T (A - \xi_k I)^{-1} A \vec w_k \in Q^T \rat_{k}(A, \vec b) = \rat_k(Q^T A Q, Q^T \vec b)$. The second statment follows from the fact that $(A - \xi_kI)^{-1} A \vec v \in \rat_k(A, \vec b)$ for all $\vec v \in \rat_k(A, \vec b)$, recalling that $\vec v \in \rat_k(A, \vec b)$ is a continuation vector if and only if $(A-\xi_k I)^{-1}A \vec v \in \rat_{k+1}(A, \vec b) \setminus \rat_k(A, \vec b)$.
\end{proof}

Having established some basic facts about the projected Krylov subspace, we are now ready to prove \cref{thm:projected-krylov-equiv}.

\begin{proof}[Proof of \cref{thm:projected-krylov-equiv}]
We start by showing that there exists a $k \times k$ diagonal and orthogonal matrix $D_k$ such that $U_k = Q^T V_k D_k$. Since $\vec u_1 = Q^T \vec v_1$ by definition, it is enough to prove that $\vec u_{k+1} = \pm Q^T \vec v_{k+1}$, with the assumption that $U_{k} = Q^T V_{k} D_k$, for $k \ge 1$. If we denote by $\vec z_k$ a continuation vector for $\rat_k(Q^T A Q, Q^T \vec b)$, then by \cref{lemma:projected-kylov-properties}(d) we have that $\vec w_k = Q \vec z_k$ is a continuation vector for $\rat_k(A, \vec b)$.
We have the following chain of equalities, using \cref{lemma:projected-kylov-properties}(b) for the second equality:
\begin{align*}
\vspan \left\{U_k,\, \vec u_{k+1}\right\} &= \vspan\left\{U_{k},\, (Q^TAQ - \xi_k I)^{-1} Q^TAQ \vec z_k\right\} \\
 &= \vspan\left\{Q^T V_{k} D_k,\, Q^T(A-\xi_kI)^{-1} A \vec w_k \right\} \\
 &= Q^T \vspan\left\{ V_{k},\, (A-\xi_kI)^{-1} A \vec w_k \right\} \\
 &= Q^T \vspan\left\{ V_{k},\, \vec v_{k+1} \right\} = \vspan\left\{U_k,\, Q^T \vec v_{k+1}  \right\}.
\end{align*}
The vector $Q^T \vec v_{k+1}$ is orthogonal to the columns of $U_k = Q^T V_k D_k$, since 
\begin{equation*}
\vec v_{k+1}^TQQ^TV_{k} = \vec v_{k+1}^T (I - \vec x \vec x^T) V_{k} = \zero^T.
\end{equation*}
So we have $\vec u_{k+1} = \pm Q^T \vec v_{k+1}$, since both vectors are in $\rat_k(Q^TAQ, Q^T \vec b)$ and they are orthogonal to the columns of $U_k$. Hence we have obtained $U_k = Q^TV_k D_k$ and $Q U_k = QQ^T V_k D_k = V_k D_k$, where $D_k$ is a diagonal matrix whose diagonal elements are equal to $\pm 1$.

Now it is straightforward to see that $C_k = D_k B_k D_k$ and that $\barvec y_k = \barbarvec y_k$. Indeed we have 
\begin{displaymath}
\begin{aligned}
\barbarvec y_k &= Q U_k f(C_k) U_k^T Q^T \vec b \\
&= V_k D_k f(U_k^T Q^T AQ U_k)D_k V_k^T \vec b \\
&= V_k D_k f(D_k V_k^T A V_k D_k) D_k V_k^T \vec b \\
&= V_k f(B_k) V_k^T \vec b = \barvec y_k.
\end{aligned}
\end{displaymath}
\end{proof}

The result of~\cref{thm:projected-krylov-equiv} can also be seen as a consequence of the implicit Q theorem for rational Arnoldi decompositions~\cite[Theorem~3.2]{BerljafaGuettel15}.

Although~\cref{thm:projected-krylov-equiv} is guaranteed to hold only in exact arithmetic, we observed from our experiments that the error curves given by the two approximations \cref{eqn:krylov-approx--no-proj} and \cref{eqn:krylov-approx--proj} are almost always overlapping, so the implicit projection is a valid (and cheaper) alternative to~\cref{eqn:krylov-approx--proj}.

\begin{remark}
\label{rem:approaches-similar-to-implicit-projection}
The approach described in this section is similar to the deflation and augmentation strategies used in the solution of linear systems with Krylov methods: the aim of these techniques is to include exact or approximate spectral information on the matrix in order to speed up the convergence. This can be done by either adding a few known eigenvectors to the Krylov subspace, or by directly solving a deflated problem constructed using the spectral information on the matrix. Since the implicit projection method constructs a Krylov subspace that is orthogonal to the vector $\vec x$ (see~\cref{lemma:projected-kylov-properties}(a)), it can be interpreted as an implicit way to construct an augmented Krylov subspace, also containing the eigenvector $\vec x$. For additional details on deflation and augmentation techniques used in Krylov methods for the solution of linear systems, we refer to the review article~\cite[Section~9]{SimonciniSzyld06} and to the references cited therein.
\end{remark}


\section{Numerical experiments}
\label{sec:numerical-experiments}
In this section we test and compare the performance of the various methods for the computation of $f(A) \vec b$ that we presented earlier, using them to approximate the solution to the fractional diffusion equation \cref{eqn:diffusion-fractional} on real-world networks, both undirected and directed. 
Recall that the solution to \cref{eqn:diffusion-fractional} at time $t$ can be expressed in the form
\begin{equation}
\label{eqn:experiments--solution}
\vec u(t) = f(L^T) \vec u_0, \qquad f(z) = e^{-t z^\alpha}, \quad t \ge 0, \quad \alpha \in (0,1).
\end{equation}
Since the graphs we consider are strongly connected, the eigenvalues $\lambda_1, \dots, \lambda_n$ of the graph Laplacian can be ordered in a way such that $0 = \lambda_1 < \abs{\lambda_2} \le \dots \le \abs{\lambda_n}$. 

We use the result of \cref{thm:projected-krylov-equiv} to compute $f(L^T) \vec u_0$ via \cref{eqn:projected-krylov-general} in the following way: letting $\beta = \one^T \vec u_0 > 0$, there exists $\vec w \perp \one$ such that $\vec u_0 = \vec w + \beta \vec z$ (recall that $L^T \vec z = \zero$), and thus we can compute $f(L^T) \vec u_0$ as
\begin{equation}
\label{eqn:experiments--implicit-projection}
f(L^T) \vec u_0 = f(L^T) \vec w + \beta f(L^T) \vec z = f(L^T) \vec w + \beta \vec z.
\end{equation}
\cref{thm:projected-krylov-equiv} guarantees that, at least in exact arithmetic, a rational Krylov method for $f(L^T) \vec w$ yields the same approximate solution as the same Krylov method applied to the projected problem $f(Q^T L^T Q) Q^T \vec w$. We refer to the method obtained by using~\cref{eqn:experiments--implicit-projection} and approximating $f(L^T)\vec w$ with a Krylov method as an \emph{implicitly projected Krylov method}.

As we mentioned earlier, we use poles located on the negative real axis $(-\infty, 0)$. For the Shift-and-Invert Krylov method, we compare two different choices of poles. 
Recall that in the case of a symmetric positive definite matrix $A$, if $a>0$ is a lower bound for the smallest eigenvalue of $A$ and $b>0$ is an upper bound for its largest eigenvalue, so that $\sigma(A)\subset [a, b]$, an effective pole choice for the Shift-and-Invert Krylov method is given by $\xi = - \sqrt{ab}$ (see, e.g., \cite[Section~6]{BeckermannReichel09}). In analogy with this choice, we use the pole $\xi = -\sqrt{\abs{\lambda_2 \lambda_n}}$: if the graph is undirected, when we use the rank-one shift approach \cref{eqn:rank-one-shift-krylov} with $\theta \ge \abs{\lambda_2}$, this pole corresponds exactly to the optimal choice $\xi = -\sqrt{\lambda_\text{min}\lambda_\text{max}}$ for symmetric positive definite matrices; the same choice appears to be reasonable also for the singular graph Laplacian and in the directed case. Indeed, our experiments show that this choice always provides a reliable convergence rate.

As a second possible choice for the Shift-and-Invert Krylov method, we use the pole $\xi = -t^{-2/\alpha}$ proposed in \cite{MoretNovati19}; this choice is based on an integral bound for the error of the Shift-and-Invert Krylov method, obtained using an integral expression for the function $f$. The choice $\xi = -t^{-2/\alpha}$ was proposed for specific functions that arise in the context of fractional differential equations, like $f(z) = e^{-tz^\alpha}$ or $f(z) = (1 + tz^\alpha)^{-1}$, with $\alpha \in (0,1)$ and $t > 0$. This pole choice is particularly effective when $t$ is large, but it is more sensitive to changes in the parameters.

For the rational Krylov method based on the equidistributed sequence (EDS) described in \cref{sec:laplace-stieltjes-functions}, we computed the asymptotically optimal poles using the spectral interval $[0.99 \cdot\abs{\lambda_2}, \,1.01\cdot\abs{\lambda_n}]$, once again ignoring the presence of the eigenvalue of the Laplacian at $0$. The resulting poles are located on the negative real axis $(-\infty, 0)$.
Similarly to the choice $\xi = -\sqrt{\abs{\lambda_2 \lambda_n}}$ for the Shift-and-Invert Krylov method, this pole sequence is guaranteed to have the asymptotic rate of convergence \cref{eqn:laplace-stieltjes-convergence-rate} on the rank-one shifted matrix $L^T + \theta \vec z \one^T$ when the graph is undirected (for $\theta \ge \abs{\lambda_2}$). The experiments that we performed suggest that the same choice is still very effective also in the directed case and even when applied directly to the singular graph Laplacian. Note that this method, as well as the Shift-and-Invert method with $\xi = -\sqrt{\abs{\lambda_2 \lambda_n}}$, requires the knowledge of the largest and smallest nonzero eigenvalues of the graph Laplacian $L$, that have to be computed beforehand.

All the experiments were performed in Matlab using the \texttt{rat\_krylov} function in the Rational Krylov toolbox \cite{RKToolbox}. The shifted linear systems in the Shift-and-Invert Krylov method were solved by computing beforehand an LU decomposition of the permuted matrix $P^T L^T P - \xi I$, where $P$ is a fill-in reducing permutation matrix obtained using the \texttt{amd} Matlab function.  In the rank-one shifted and in the projected version of the methods, we used the modified Sherman-Morrison formula \cref{eqn:cancellation--final}, with the vector $\vec \psi$ defined in~\cref{eqn:cancellation--phi}, and the identities \cref{eqn:projection-operations} to avoid explicitly forming the dense matrices $L^T + \theta \vec z \one^T$ and $Q^T L^T Q$. The use of \cref{eqn:cancellation--final} over \cref{eqn:Sherman-Morrison-laplacian} allowed us to avoid the cancellation in \cref{eqn:Sherman-Morrison-cancellation} for poles close to zero, as discussed after \cref{rem:Sherman-Morrison-small-xi}. We set $\theta = 1$ in \cref{eqn:rank-one-shift-krylov} and we used the matrix $Q$ defined by \cref{eqn:projection-Q}. 

The error that we display is the relative error in the $2$-norm, $\norm{\vec y - \barvec y_k}_2$, where $\vec y$ is the solution to \cref{eqn:diffusion-fractional} at a certain time $t$, or an accurate approximation computed with a Krylov method when the size of the graph is large. In all our experiments, we first extracted the largest strongly connected component (LCC) of a graph and we restricted our problem to that component. Information on the number of nodes and edges of these components, as well as the maximum and minimum nonzero eigenvalues of the corresponding graph Laplacians, are reported in \cref{table:1}. The real-world networks that we used are available in the Sparse Matrix Collection \cite{SparseMatrixCollection}.

\begin{table}
\caption{Some information on the graphs used in the experiments. $A$ denotes the $n \times n$ adjacency matrix of the LCC of each graph.}
\label{table:1}
\begin{center}
\begin{tabular}{| l || r r r c c |}
\hline
Graph & $n$ & \texttt{nnz(A)} & \texttt{nnz(A-A')} & $\lambda_2$ & $\lambda_n$ \\
\hline
\hline
minnesota & 2640 & 6604 & 0 & 8.45e-04 & 6.88e+00 \\
Oregon-1 & 11174 & 46818 & 0 & 8.44e-02 & 2.39e+03 \\
ca-HepPh & 11204 & 235238 & 0 & 3.55e-02 & 4.92e+02 \\
as-22july06 & 22963 & 96872 & 0 & 5.07e-02 & 2.39e+03 \\
Roget & 904 & 4830 & 4128 & 8.22e-02 & 2.27e+01 \\
wiki-Vote & 1300 & 39456 & 67204 & 3.73e-01 & 5.96e+02 \\
enron & 8271 & 146260 & 212124 & 7.01e-02 & 8.79e+02 \\
p2p-Gnutella30 & 8490 & 31706 & 63412 & 2.47e-01 & 2.00e+01 \\
hvdc1 & 24836 & 133620 & 4856 & 2.03e-04 & 1.80e+02 \\
\hline
\end{tabular}
\end{center}
\end{table}

As we observed in \cref{sec:fractional-laplacian}, the solution $\vec u(t)$ to the fractional diffusion equation \cref{eqn:diffusion-fractional} is a probability vector for all $t \ge 0$, and hence it is desirable for the approximations computed with Krylov methods to have the same property. In our experiments, we observed that this is indeed the case for the Krylov methods applied to the shifted matrix $L^T + \vec z \one^T$, as well as for the projected and implicitly projected Krylov methods. On the other hand, in general the approximate solutions obtained by working directly with the singular graph Laplacian $L^T$ do not have entries that sum up to 1, and often exhibit a wildly oscillating error; moreover, upon closer inspection, we observed that a significant portion of the error lied along the left null-eigenvector $\vec z$ of the graph Laplacian $L$. By subtracting a multiple of $\vec z$ from the approximate solution to enforce $\one^T \barvec y_k = 1$, we were able to ``correct'' this oscillating component of the error; specifically, for each $k$ we replaced the approximate solution $\barvec y_k$ obtained after $k$ iterations of a Krylov method with the corrected approximation
\begin{equation}
\label{eqn:experiments--correction-step}
\barvec y_k - (\one^T \barvec y_k - 1) \vec z,
\end{equation}
whose entries now sum up to 1. 
We found that this correction greatly reduced the error both in the undirected and in the directed case, and hence we
always applied it to the standard version of Krylov methods in all our experiments. On the other hand, the error correction \cref{eqn:experiments--correction-step} was never needed for the shifted, projected and implicitly projected variants of the methods. The error with and without the correction \cref{eqn:experiments--correction-step} is illustrated for the directed graph \texttt{Roget} in \cref{fig:1}.

\begin{figure}
\makebox[\linewidth][c]{
\begin{subfigure}{.7\textwidth}
\centering
\includegraphics[width=\linewidth]{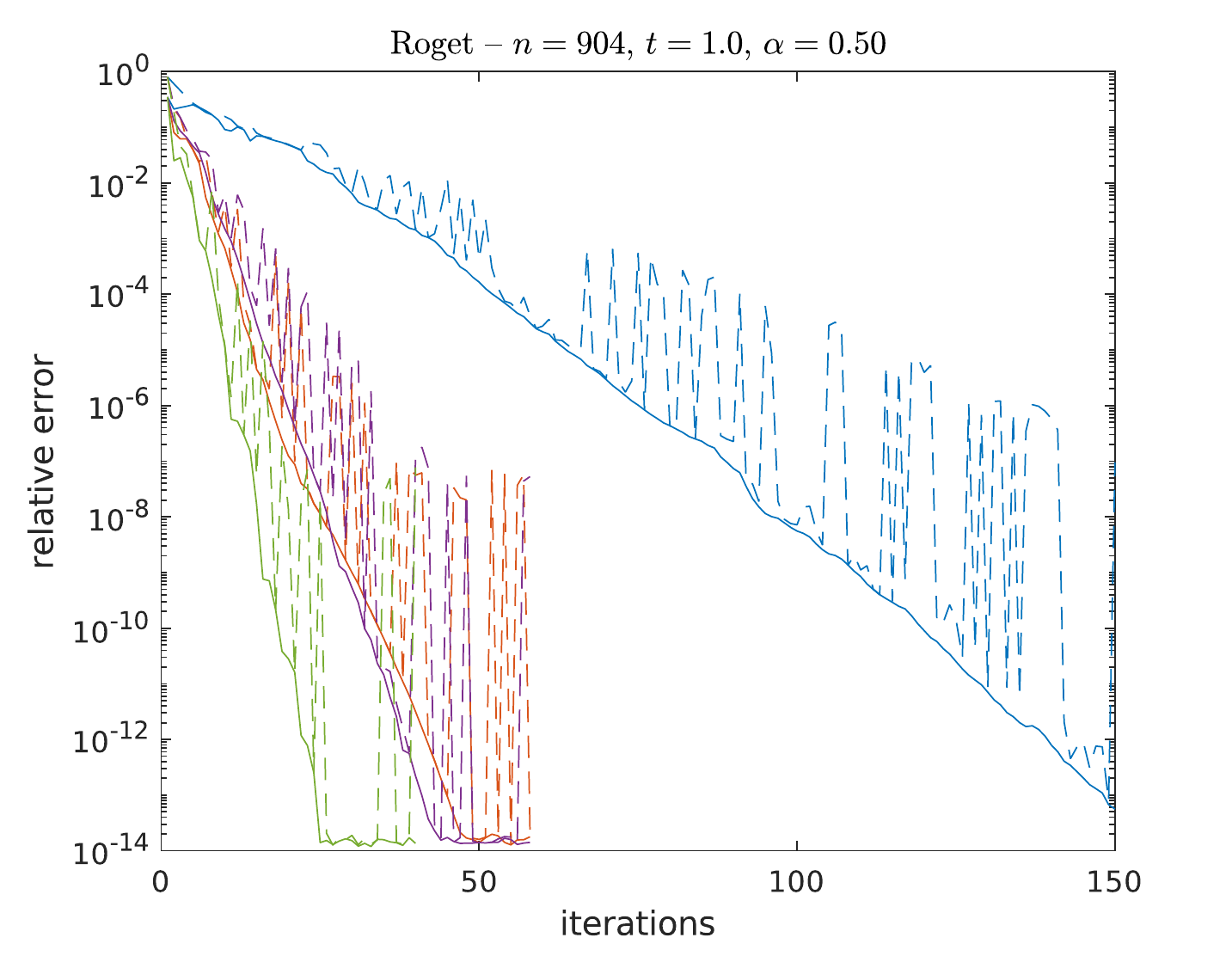}
\caption{}
\end{subfigure}
\hfill
\begin{subfigure}{.45\textwidth}
\centering
\includegraphics[width=\linewidth]{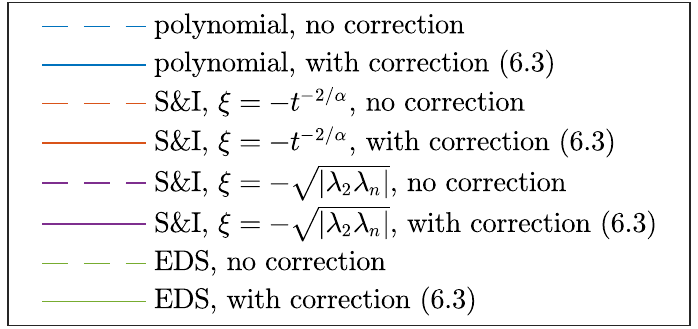}
\caption{}
\end{subfigure}}
\caption{Convergence of Krylov methods for the computation of the solution to the fractional diffusion equation \cref{eqn:diffusion-fractional} with $\alpha = 0.5$ and $t = 1$ on the LCC of the directed graph \texttt{Roget}, with and without the error correction \cref{eqn:experiments--correction-step}. Note the logarithmic scale on the vertical axis.}
\label{fig:1}
\end{figure}

\begin{figure}
\makebox[\linewidth][c]{
\begin{subfigure}{.6\textwidth}
\centering
\includegraphics[width=\linewidth]{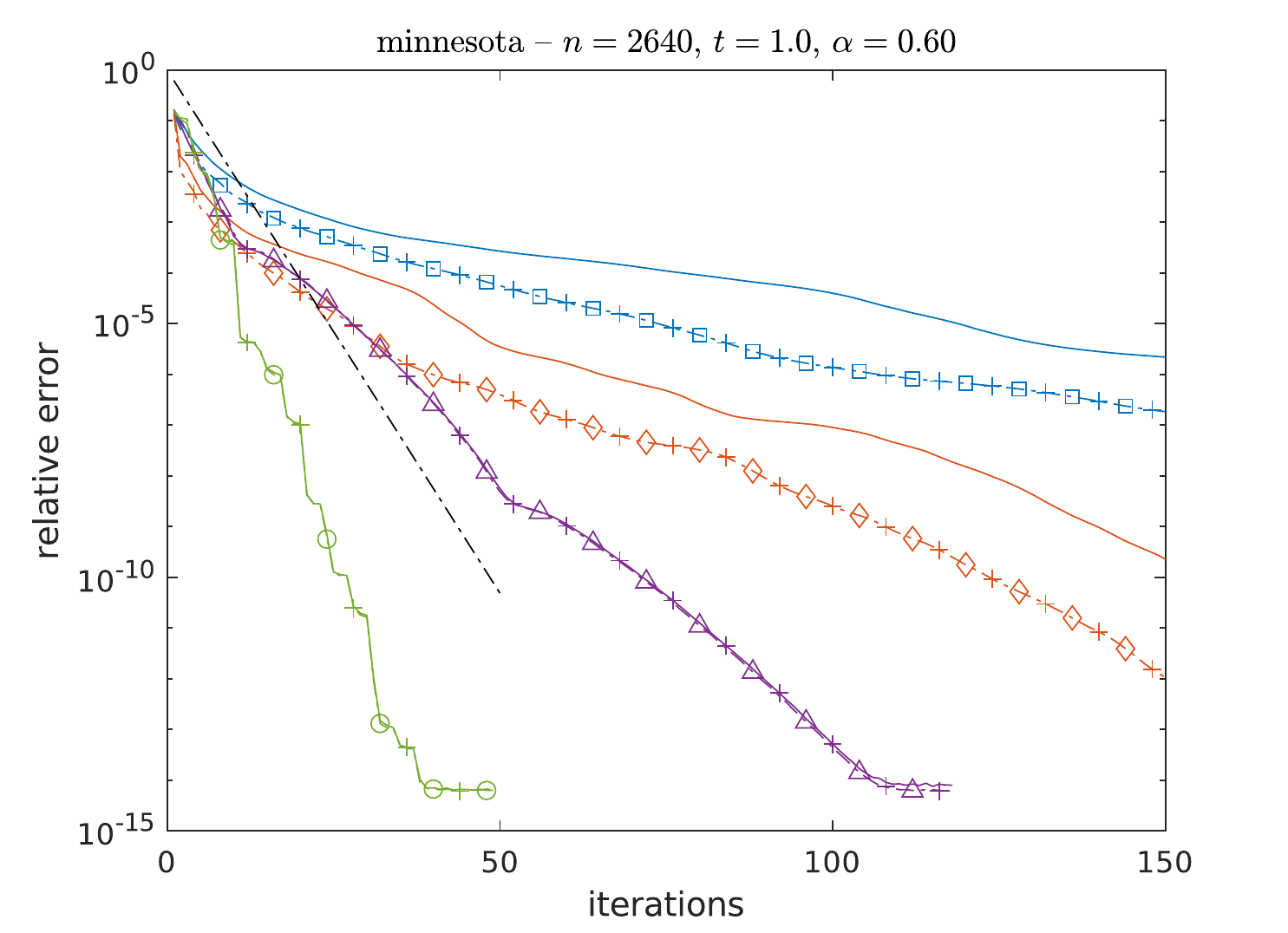}
\caption{}
\end{subfigure}
\hfill
\begin{subfigure}{.6\textwidth}
\centering
\includegraphics[width=\linewidth]{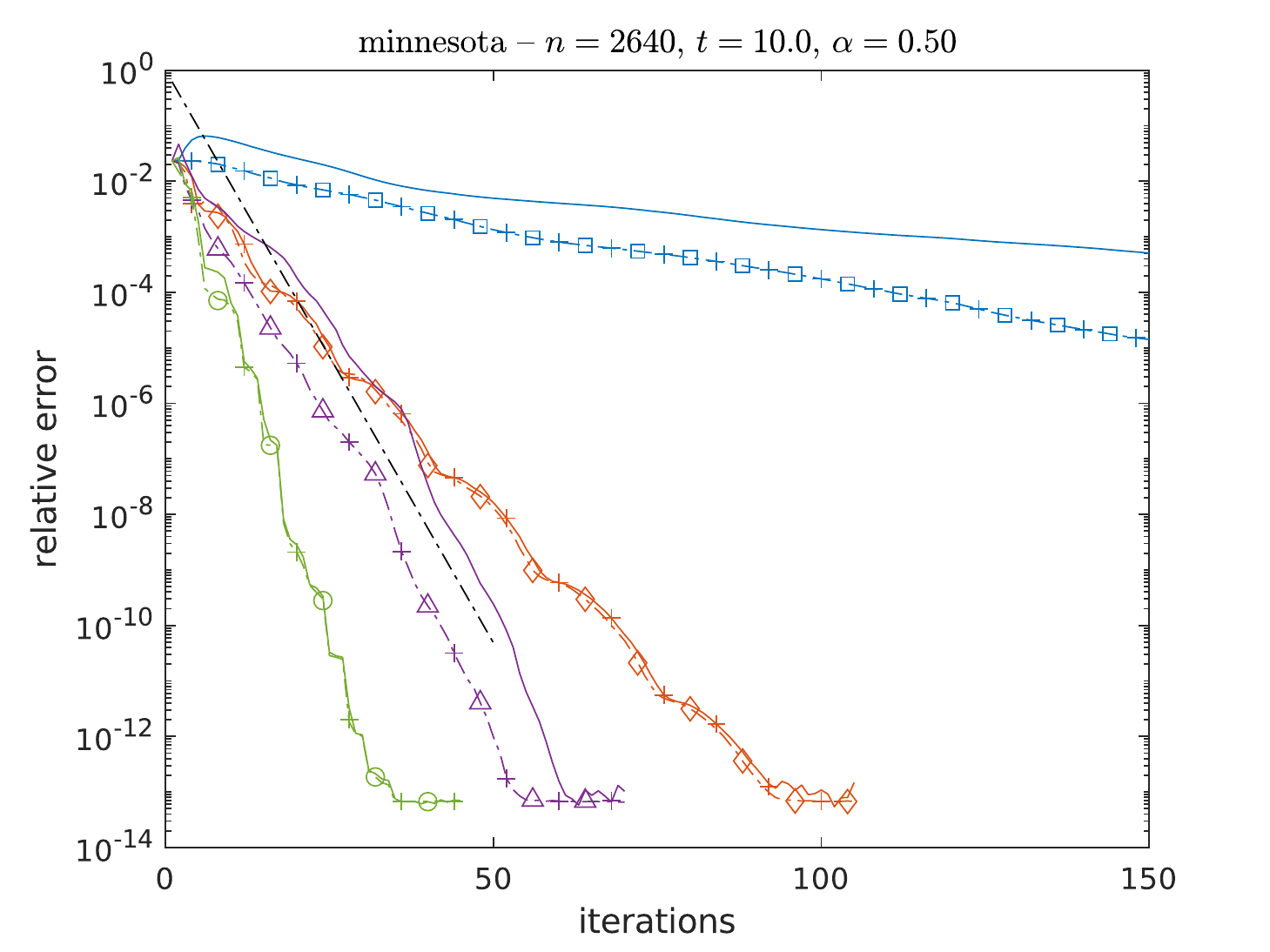}
\caption{}
\end{subfigure}}
\hfill
\makebox[\linewidth][c]{
\begin{subfigure}{.6\textwidth}
\centering
\includegraphics[width=\linewidth]{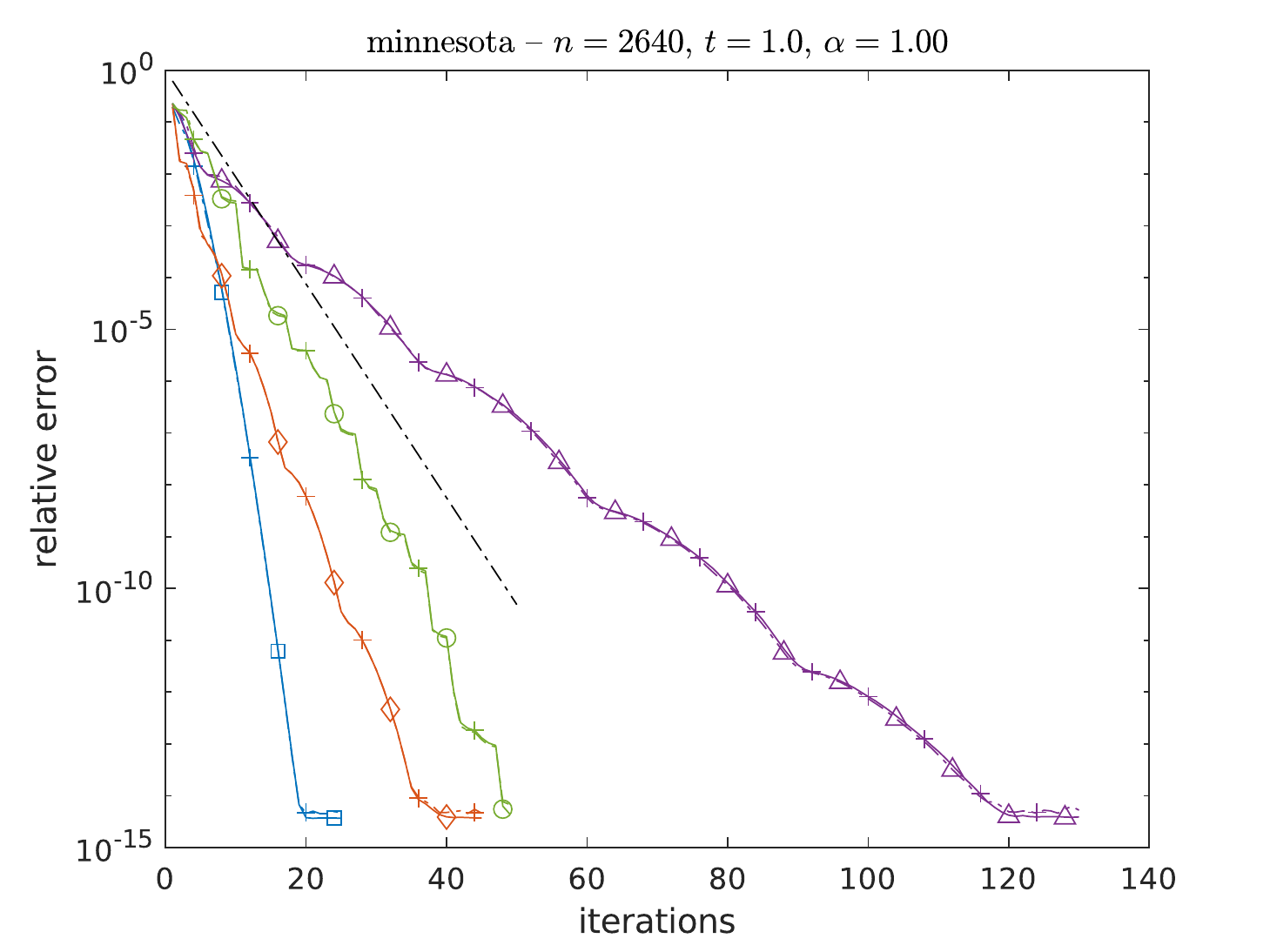}
\caption{}
\end{subfigure}
\hfill
\begin{subfigure}{.6\textwidth}
\centering
\includegraphics[width=.55\linewidth]{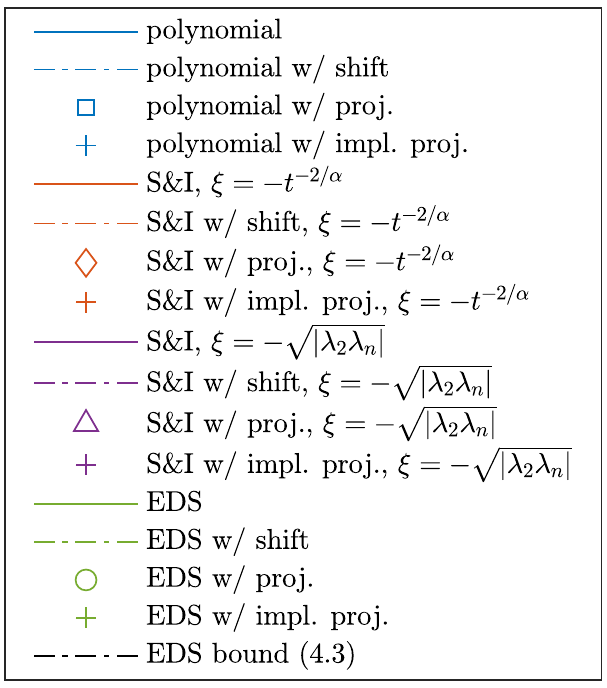}
\caption{}
\end{subfigure}}
\caption{Convergence of Krylov methods for the computation of the solution to the fractional diffusion equation \cref{eqn:diffusion-fractional} on the LCC of the undirected graph \texttt{minnesota}, for different values of $\alpha$ and $t$. Note the logarithmic scale on the vertical axis.}
\label{fig:2}
\end{figure}

\begin{figure}
\makebox[\linewidth][c]{
\begin{subfigure}{.6\textwidth}
\centering
\includegraphics[width=\linewidth]{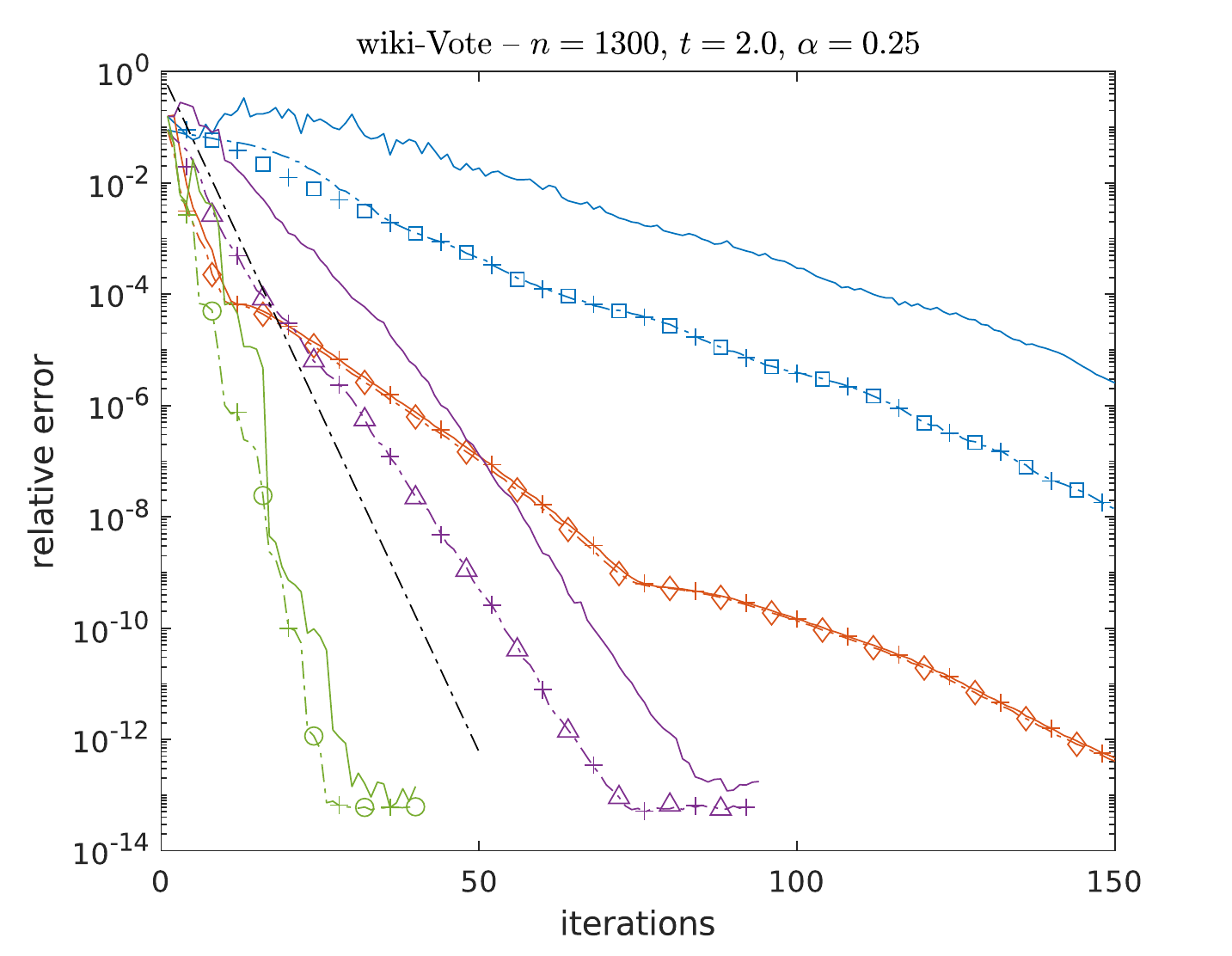}
\caption{}
\end{subfigure}
\hfill
\begin{subfigure}{.6\textwidth}
\centering
\includegraphics[width=\linewidth]{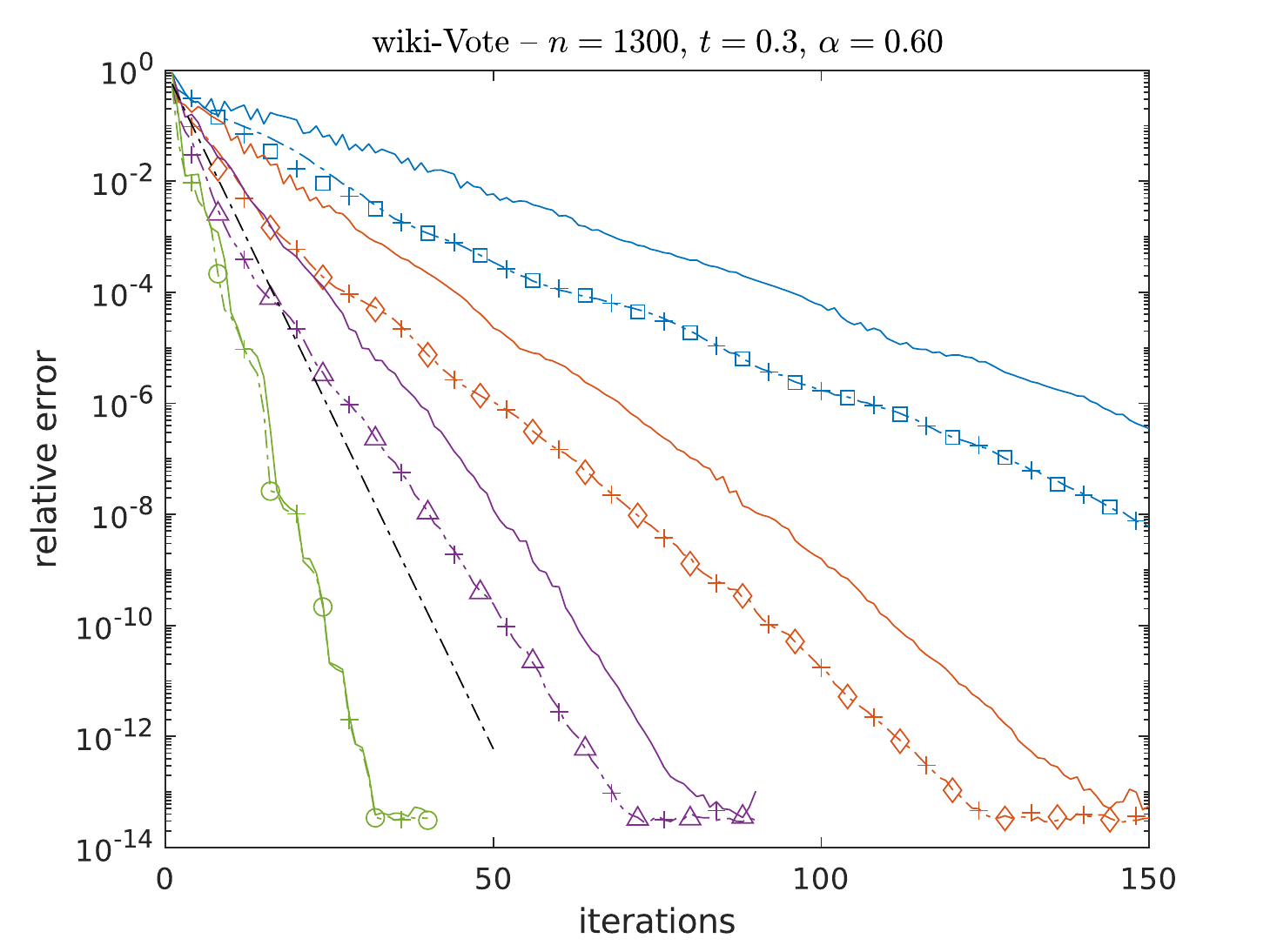}
\caption{}
\end{subfigure}}
\hfill
\makebox[\linewidth][c]{
\begin{subfigure}{.6\textwidth}
\centering
\includegraphics[width=\linewidth]{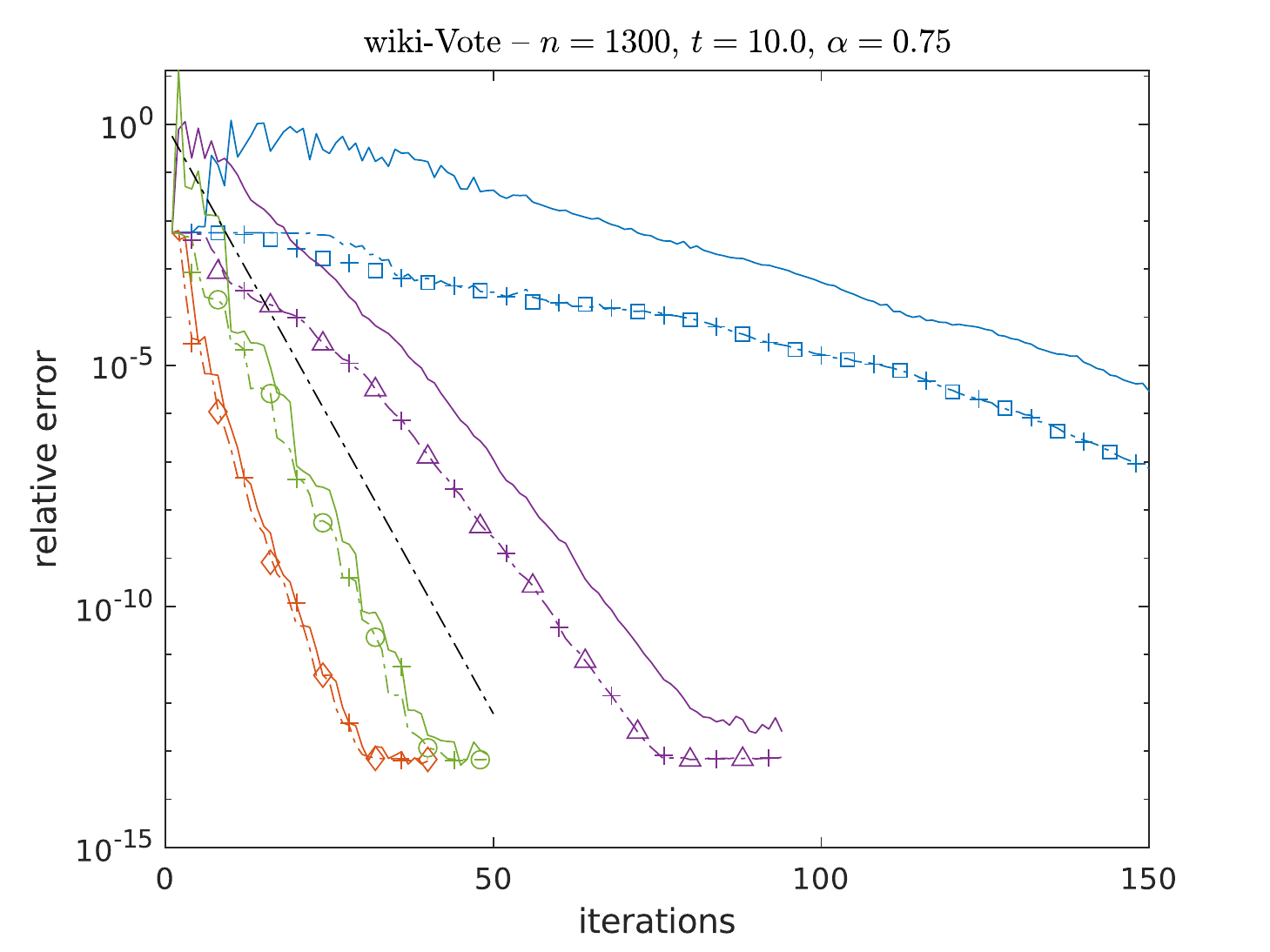}
\caption{}
\end{subfigure}
\hfill
\begin{subfigure}{.6\textwidth}
\centering
\includegraphics[width=.55\linewidth]{legend.pdf}
\caption{}
\end{subfigure}}
\caption{Convergence of Krylov methods for the computation of the solution to the fractional diffusion equation \cref{eqn:diffusion-fractional} on the LCC of the directed graph \texttt{wiki-Vote}, for different values of $\alpha$ and $t$. Note the logarithmic scale on the vertical axis.}
\label{fig:3}
\end{figure}

\begin{figure}
\makebox[\linewidth][c]{
\begin{subfigure}{.6\textwidth}
\centering
\includegraphics[width=\linewidth]{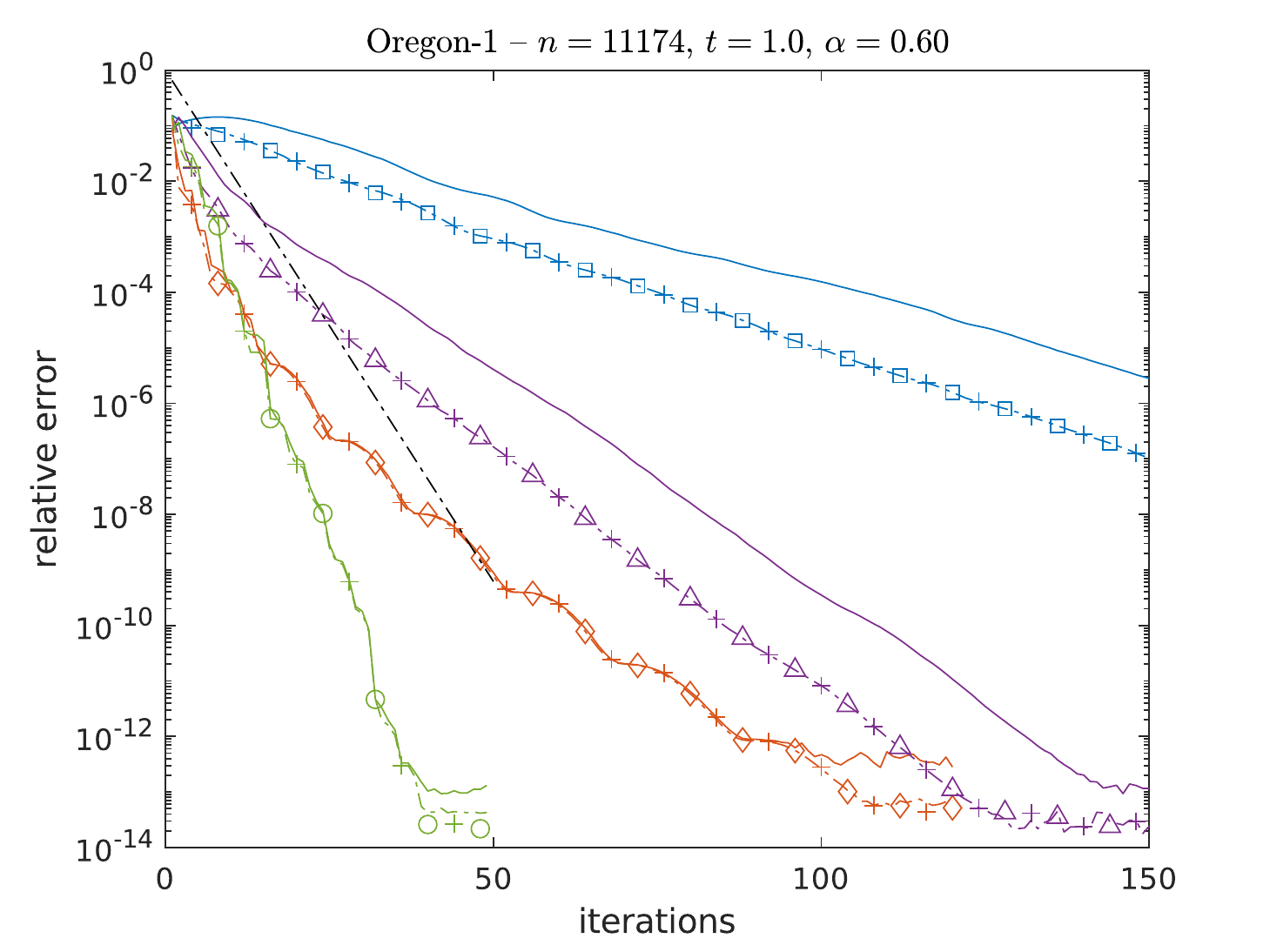}
\caption{}
\end{subfigure}
\hfill
\begin{subfigure}{.6\textwidth}
\centering
\includegraphics[width=\linewidth]{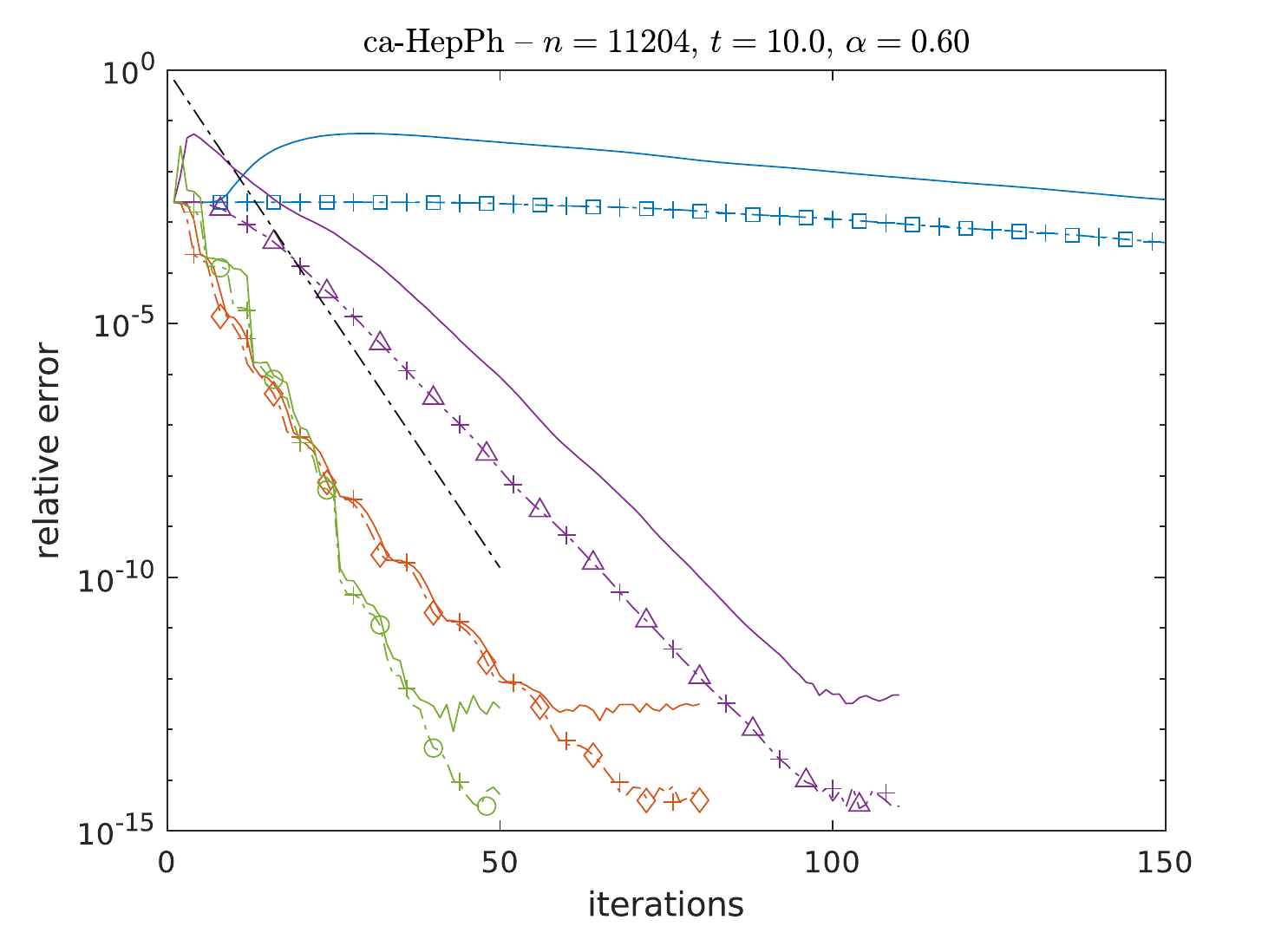}
\caption{}
\end{subfigure}}
\hfill
\makebox[\linewidth][c]{
\begin{subfigure}{.6\textwidth}
\centering
\includegraphics[width=\linewidth]{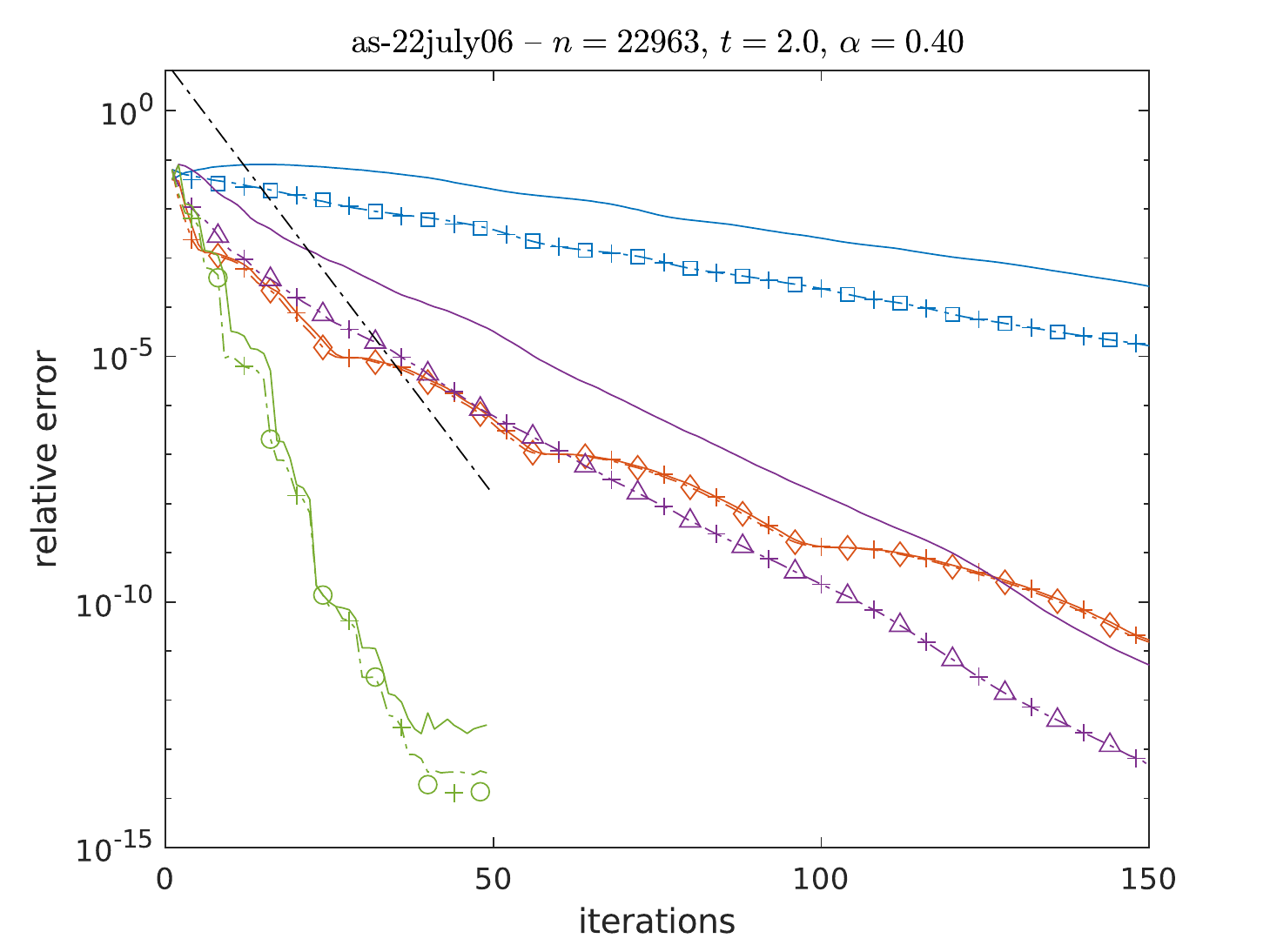}
\caption{}
\end{subfigure}
\hfill
\begin{subfigure}{.6\textwidth}
\centering
\includegraphics[width=.55\linewidth]{legend.pdf}
\caption{}
\end{subfigure}}
\caption{Convergence of Krylov methods for the computation of the solution to the fractional diffusion equation \cref{eqn:diffusion-fractional} on the LCC of the undirected graphs \texttt{Oregon-1}, \texttt{ca-HepPh} and \texttt{as-22july06}, for different values of $\alpha$ and $t$. Note the logarithmic scale on the vertical axis.}
\label{fig:4}
\end{figure}

\begin{figure}
\makebox[\linewidth][c]{
\begin{subfigure}{.6\textwidth}
\centering
\includegraphics[width=\linewidth]{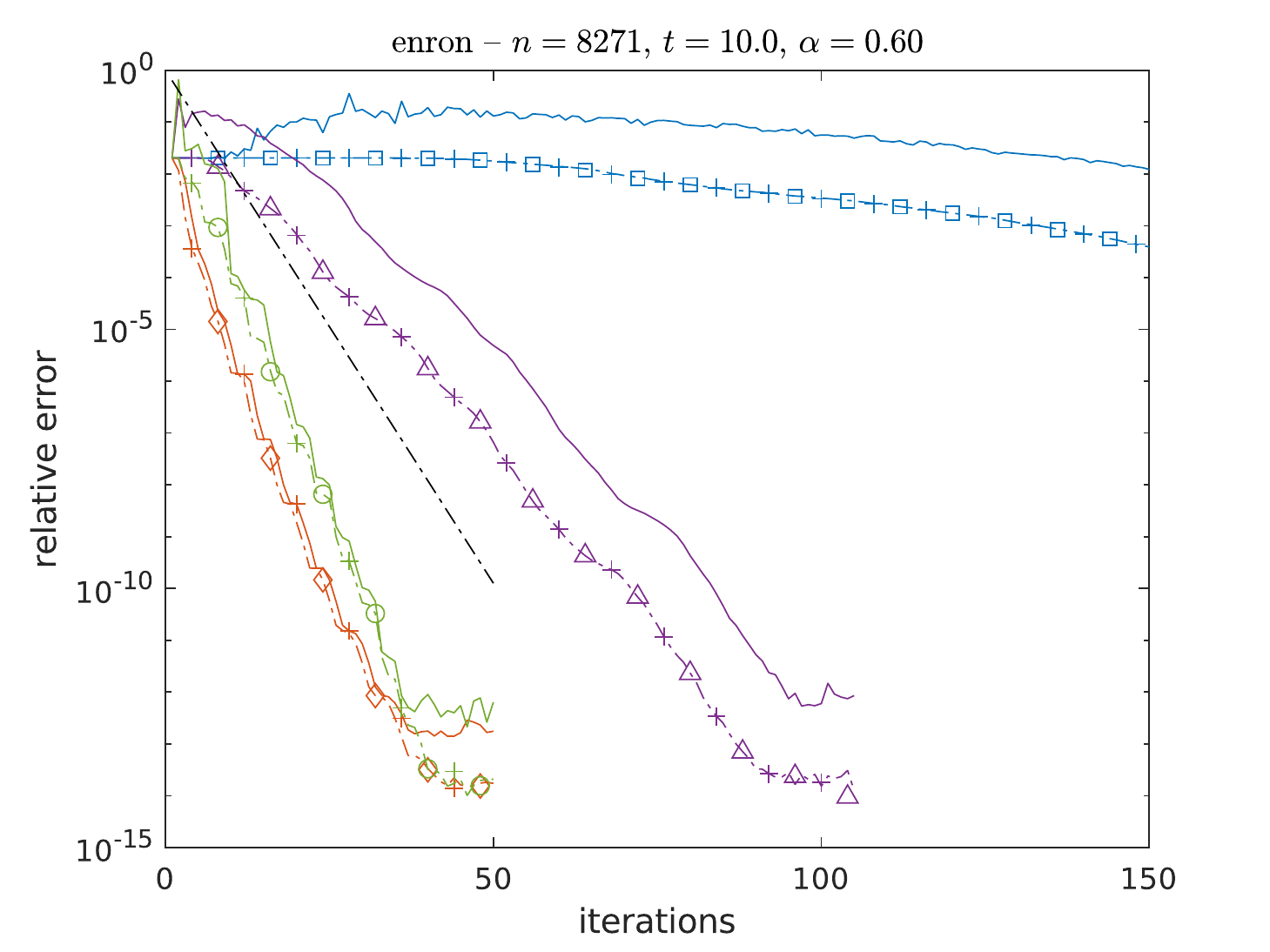}
\caption{}
\end{subfigure}
\hfill
\begin{subfigure}{.6\textwidth}
\centering
\includegraphics[width=\linewidth]{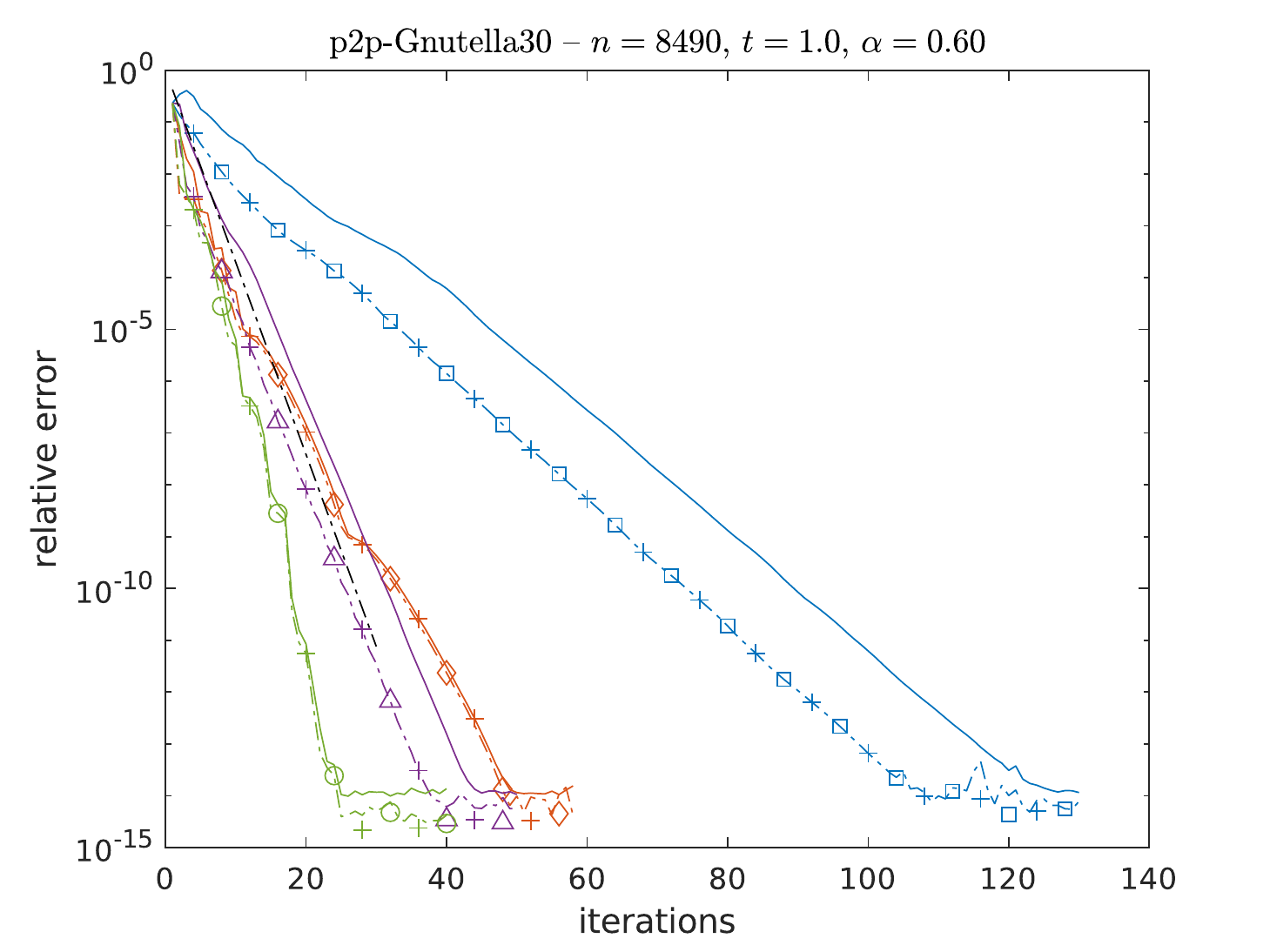}
\caption{}
\end{subfigure}}
\hfill
\makebox[\linewidth][c]{
\begin{subfigure}{.6\textwidth}
\centering
\includegraphics[width=\linewidth]{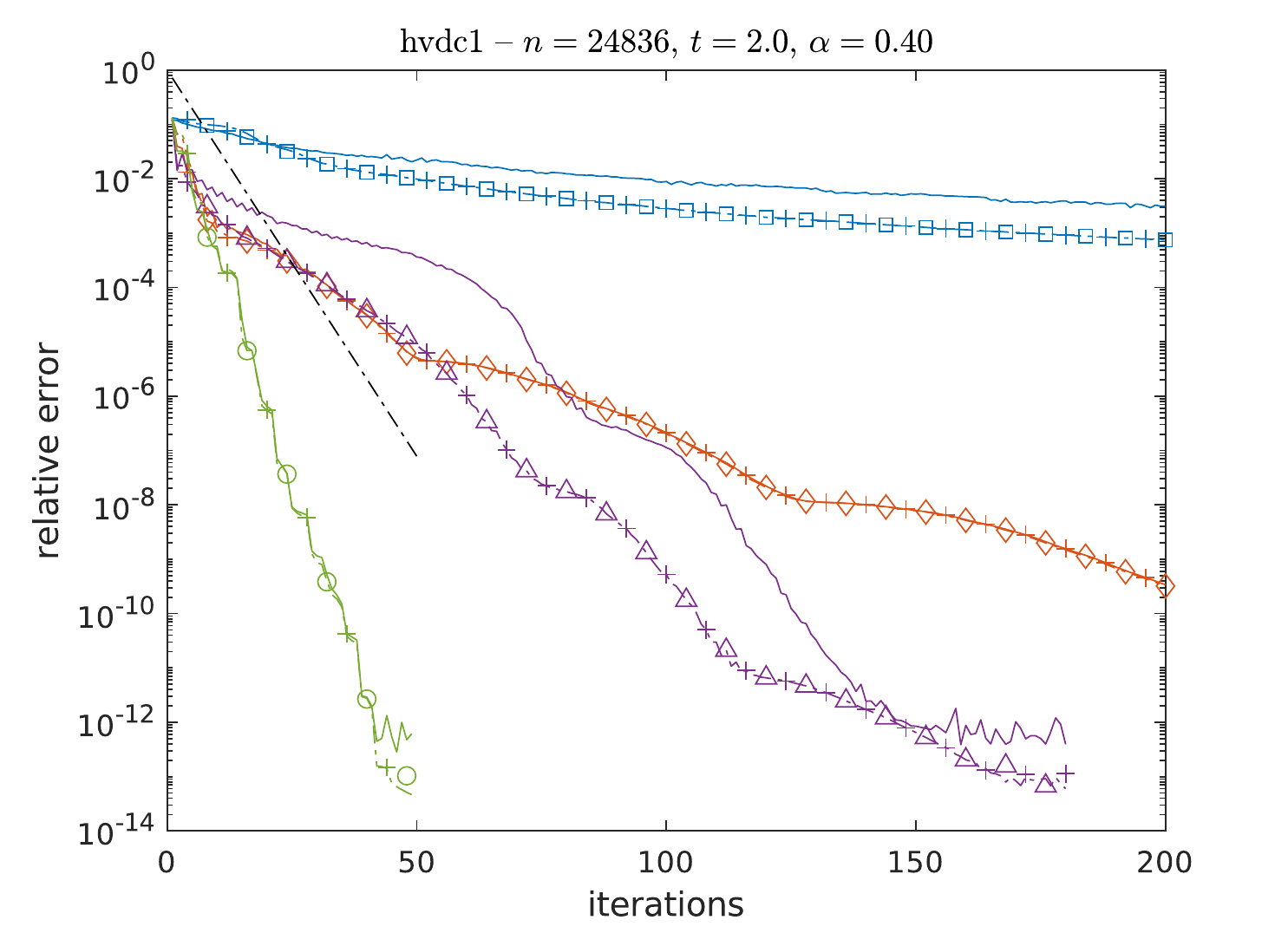}
\caption{}
\end{subfigure}
\hfill
\begin{subfigure}{.6\textwidth}
\centering
\includegraphics[width=.55\linewidth]{legend.pdf}
\caption{}
\end{subfigure}}
\caption{Convergence of Krylov methods for the computation of the solution to the fractional diffusion equation \cref{eqn:diffusion-fractional} on the LCC of the directed graphs \texttt{enron}, \texttt{p2p-Gnutella} and \texttt{hvdc1}, for different values of $\alpha$ and $t$. Note the logarithmic scale on the vertical axis.}
\label{fig:5}
\end{figure}

The first set of experiments is performed on graphs of moderate size (about $1000$ nodes), where the solution to \cref{eqn:experiments--solution} can still be computed directly in a reasonable amount of time via an eigendecomposition of the graph Laplacian. In these experiments, we used the largest connected component (LCC) of the undirected graph \texttt{minnesota} (2640 nodes) and the LCC ot the directed graph \texttt{wiki-Vote} (1300 nodes). The results for different values of $t$ and $\alpha$ are shown in \cref{fig:2,fig:3}. 
We can see that the EDS method always converges in the smallest number of iterations, with a rate that is always equal to or better than the one predicted by the bound~\cref{eqn:laplace-stieltjes-convergence-rate}, even in the nonsymmetric case. The Shift-and-Invert (S\&I) Krylov method with $\xi = -\sqrt{\abs{\lambda_2 \lambda_n}}$ has a reliable convergence rate for all choices of the parameters, while the one with $\xi = -t^{-2/\alpha}$ is more effective for large $t$ (see, e.g., \cref{fig:3}(c)); however, it is more sensitive to changes in the parameters, and sometimes converges slowly (\cref{fig:2}(a)). As expected, the polynomial Krylov method usually converges very slowly, except for the case $\alpha = 1$, corresponding to the matrix exponential (\cref{fig:2}(c)), for which polynomial Krylov methods are known to be effective. Note that in~\cref{fig:2}(b) it holds $t^{-2/\alpha} = 10^{-4}$, and observe that the rank-one shifted S\&I method with $\xi = -t^{-2/\alpha}$ attains the same accuracy as the other methods, as a result of formula \cref{eqn:cancellation--final}; because of the cancellation discussed in~\cref{rem:Sherman-Morrison-small-xi}, the same accuracy could not be attained by using \cref{eqn:Sherman-Morrison-laplacian} in place of \cref{eqn:cancellation--final}.
The error curves of the desingularized methods are always overlapped to each other (showing, in particular, that the result of~\cref{thm:projected-krylov-equiv} also holds in finite precision arithmetic), and they often represent an improvement over the standard version of Krylov methods. Note that the desingularization techniques seem to be always effective for the polynomial Krylov method, reducing the error of at least one or two orders of magnitude compared to the standard version. We also point out that in certain cases the desingularized methods manage to attain a higher final accuracy: this is most apparent in \cref{fig:3}(c) for the S\&I method with $\xi = -\sqrt{\abs{\lambda_2\lambda_n}}$.

The second set of experiments deals with larger graphs with about $10000$ or $20000$ nodes, for which the computation of an eigendecomposition of the graph Laplacian would be very expensive. Based on the results of the experiments on smaller graphs, in this case we compute the error using as the reference solution an approximation to $f(L^T) \vec u_0$ computed using the EDS rational Krylov method with implicit projection, stopping the iterations when the $2$-norm of the difference between two consecutive iterates is of the order of machine precision. To avoid bias in the error curves, we chose a different starting point for the EDS of the reference solution, thus producing a sequence of poles that is different from the one used to plot the error of the EDS method. We used the LCC of the undirected graphs \texttt{Oregon-1} (11174 nodes), \texttt{ca-HepPh} (11204 nodes) and \texttt{as-july06} (22963 nodes), and of the directed graphs \texttt{enron} (8271 nodes), \texttt{p2p-Gnutella30} (8490 nodes) and \texttt{hvdc1} (24836 nodes). The results for different values of $\alpha$ and $t$ are shown in \cref{fig:4,fig:5}.
The use of desingularization is again shown to be beneficial, often leading to faster convergence and attaining a better maximum accuracy. In \cref{fig:4}(b) and \cref{fig:5}(a) we can observe that the S\&I method with $\xi = -t^{-2/\alpha}$ does not suffer from cancellation by virtue of~\cref{eqn:cancellation--final}, despite the presence of poles close to zero. These experiments also show that the polynomial Krylov method can have a variable and unpredictable convergence rate, depending on the graph: there are situations in which the convergence can take place quickly (\cref{fig:5}(b)) or with moderate speed (\cref{fig:4}(a)), but more often than not this method converges very slowly and the error practically stagnates (see, e.g., \cref{fig:4}(b)).



\section{Conclusions}
\label{sec:conclusions}
In this work we have discussed the use of rational Krylov methods for the solution of the fractional diffusion equation on a graph. In order to improve the convergence speed of the methods, we have proposed three different procedures to deal with the eigenvalue at zero of the graph Laplacian, namely a rank-one shift, a subspace projection, and an implicit version of this projection. The experiments we conducted show that these three procedures yield in practice the same convergence curves, and often they are faster and attain higher accuracy than the original Krylov methods. To be applied, these methods only require the computation of the left zero-eigenvector of the graph Laplacian, and an additional cost of $O(n)$ per iteration for the rank-one shift and projection techniques. The implicit projection approach is extremely easy to implement, since it only modifies the starting vector for the Krylov method and it requires no additional computations at each iteration.

Among the Krylov methods that we tested, the one based on the EDS and the S\&I method with $\xi = -\sqrt{\abs{\lambda_2 \lambda_n}}$ converge quickly regardless of the parameters $\alpha$ and $t$; however, these methods require the computation of approximations to the eigenvalues $\lambda_2$ and $\lambda_n$ of the graph Laplacian. On the other hand, the S\&I method with $\xi = -t^{-2/\alpha}$ requires no previous knowledge of the spectrum of $L$, but its rate of convergence is more sensitive to changes in the parameters; even so, this method can sometimes outperform the others, especially when $t$ is large.


\section*{Acknowledgements}
We would like to thank Leonardo Robol and Valeria Simoncini for helpful suggestions.


\FloatBarrier

\bibliographystyle{siamplain}
\bibliography{ms}

\begin{thebibliography}{10}

\bibitem{BeckermannReichel09}
{\sc B.~Beckermann and L.~Reichel}, {\em Error estimates and evaluation of
  matrix functions via the {F}aber transform}, SIAM J. Numer. Anal., 47 (2009),
  pp.~3849--3883, \url{https://doi.org/10.1137/080741744}.

\bibitem{BBDS20}
{\sc M.~Benzi, D.~Bertaccini, F.~Durastante, and I.~Simunec}, {\em Non-local
  network dynamics via fractional graph {L}aplacians}, J. Complex Netw., 8
  (2020), cnaa017 (29~pages), \url{https://doi.org/10.1093/comnet/cnaa017}.

\bibitem{BenziFikaMitrouli19}
{\sc M.~Benzi, P.~Fika, and M.~Mitrouli}, {\em Graphs with absorption:
  numerical methods for the absorption inverse and the computation of
  centrality measures}, Linear Algebra Appl., 574 (2019), pp.~123--152,
  \url{https://doi.org/10.1016/j.laa.2019.03.026}.

\bibitem{RKToolbox}
{\sc M.~Berljafa, S.~Elsworth, and S.~G{\"u}ttel}, {\em A rational {K}rylov
  toolbox for {MATLAB}}, MIMS EPrint 2014.56, Manchester Institute for
  Mathematical Sciences, University of Manchester, Manchester, UK,  (2014),
  \url{http://rktoolbox.org}.

\bibitem{BerljafaGuettel15}
{\sc M.~Berljafa and S.~G\"{u}ttel}, {\em Generalized rational {K}rylov
  decompositions with an application to rational approximation}, SIAM J. Matrix
  Anal. Appl., 36 (2015), pp.~894--916,
  \url{https://doi.org/10.1137/140998081}.

\bibitem{BermanPlemmons87}
{\sc A.~Berman and R.~J. Plemmons}, {\em Nonnegative {M}atrices in the
  {M}athematical {S}ciences}, vol.~9 of Classics in Applied Mathematics, SIAM,
  Philadelphia, PA, 1994, \url{https://doi.org/10.1137/1.9781611971262}.
\newblock Revised reprint of the 1979 original.

\bibitem{BurrageHaleKay12}
{\sc K.~Burrage, N.~Hale, and D.~Kay}, {\em An efficient implicit {FEM} scheme
  for fractional-in-space reaction-diffusion equations}, SIAM J. Sci. Comput.,
  34 (2012), pp.~A2145--A2172, \url{https://doi.org/10.1137/110847007}.

\bibitem{ChapmanMesbahi11}
{\sc A.~{Chapman} and M.~{Mesbahi}}, {\em Advection on graphs}, in 2011 50th
  IEEE Conference on Decision and Control and European Control Conference,
  2011, pp.~1461--1466.

\bibitem{Crouzeix07}
{\sc M.~Crouzeix}, {\em Numerical range and functional calculus in {H}ilbert
  space}, J. Funct. Anal., 244 (2007), pp.~668--690,
  \url{https://doi.org/10.1016/j.jfa.2006.10.013}.

\bibitem{CrouzeixPalencia17}
{\sc M.~Crouzeix and C.~Palencia}, {\em The numerical range is a
  {$(1+\sqrt{2})$}-spectral set}, SIAM J. Matrix Anal. Appl., 38 (2017),
  pp.~649--655, \url{https://doi.org/10.1137/17M1116672}.

\bibitem{SparseMatrixCollection}
{\sc T.~A. Davis and Y.~Hu}, {\em The {U}niversity of {F}lorida {S}parse
  {M}atrix {C}ollection}, ACM Trans. Math. Software, 38 (2011),
  \url{https://doi.org/10.1145/2049662.2049663}.
\newblock Art. 1.

\bibitem{Estrada17}
{\sc E.~Estrada, E.~Hameed, N.~Hatano, and M.~Langer}, {\em Path {L}aplacian
  operators and superdiffusive processes on graphs. {I}. {O}ne-dimensional
  case}, Linear Algebra Appl., 523 (2017), pp.~307--334,
  \url{https://doi.org/10.1016/j.laa.2017.02.027}.

\bibitem{Estrada18}
{\sc E.~Estrada, E.~Hameed, M.~Langer, and A.~Puchalska}, {\em Path {L}aplacian
  operators and superdiffusive processes on graphs. {II}. {T}wo-dimensional
  lattice}, Linear Algebra Appl., 555 (2018), pp.~373--397,
  \url{https://doi.org/10.1016/j.laa.2018.06.026}.

\bibitem{FunderlicPlemmons81}
{\sc R.~E. Funderlic and R.~J. Plemmons}, {\em {$LU$} decomposition of
  {$M$}-matrices by elimination without pivoting}, Linear Algebra Appl., 41
  (1981), pp.~99--110, \url{https://doi.org/10.1016/0024-3795(81)90091-4}.

\bibitem{GuettelThesis}
{\sc S.~G\"{u}ttel}, {\em Rational {K}rylov {M}ethods for {O}perator
  {F}unctions}, PhD thesis, Technische Universit{\"a}t Bergakademie Freiberg,
  Germany, 2010, \url{http://eprints.ma.man.ac.uk/2586/}.
\newblock Dissertation available as MIMS Eprint 2017.39.

\bibitem{Guettel13}
{\sc S.~G\"{u}ttel}, {\em Rational {K}rylov approximation of matrix functions:
  numerical methods and optimal pole selection}, GAMM-Mitt., 36 (2013),
  pp.~8--31, \url{https://doi.org/10.1002/gamm.201310002}.

\bibitem{Higham08}
{\sc N.~J. Higham}, {\em Functions of {M}atrices. {T}heory and {C}omputation},
  SIAM, Philadelphia, PA, 2008, \url{https://doi.org/10.1137/1.9780898717778}.

\bibitem{HornJohnson91}
{\sc R.~A. Horn and C.~R. Johnson}, {\em Topics in Matrix Analysis}, Cambridge
  University Press, Cambridge, 1991,
  \url{https://doi.org/10.1017/CBO9780511840371}.

\bibitem{ILTA06}
{\sc M.~Ili\'{c}, F.~Liu, I.~Turner, and V.~Anh}, {\em Numerical approximation
  of a fractional-in-space diffusion equation. {II}. {W}ith nonhomogeneous
  boundary conditions}, Fract. Calc. Appl. Anal., 9 (2006), pp.~333--349.

\bibitem{IlicTurner04}
{\sc M.~Ili\'{c} and I.~Turner}, {\em Approximating functions of a large sparse
  positive definite matrix using a spectral splitting method}, ANZIAM J., 46
  (2004/05), pp.~C472--C487.

\bibitem{IlicTurner08}
{\sc M.~Ili\'{c}, I.~Turner, and V.~Anh}, {\em A numerical solution using an
  adaptively preconditioned {L}anczos method for a class of linear systems
  related with the fractional {P}oisson equation}, J. Appl. Math. Stoch. Anal.,
   (2008), 104525 (26~pages), \url{https://doi.org/10.1155/2008/104525}.

\bibitem{MasseiRobol20}
{\sc S.~Massei and L.~Robol}, {\em Rational {K}rylov for {S}tieltjes matrix
  functions: convergence and pole selection}, BIT Numerical Mathematics,
  (2020), \url{https://doi.org/10.1007/s10543-020-00826-z}.

\bibitem{Meyer00}
{\sc C.~D. Meyer}, {\em Matrix {A}nalysis and {A}pplied {L}inear {A}lgebra},
  SIAM, Philadelphia, PA, 2000.

\bibitem{MRCNN19}
{\sc T.~Michelitsch, A.~P. Riascos, B.~Collet, A.~Nowakowski, and
  F.~Nicolleau}, {\em Fractional Dynamics on Networks and Lattices}, John Wiley
  \& Sons, Ltd, 2019, \url{https://doi.org/10.1002/9781119608165}.

\bibitem{MoretNovati04}
{\sc I.~Moret and P.~Novati}, {\em R{D}-rational approximations of the matrix
  exponential}, BIT, 44 (2004), pp.~595--615,
  \url{https://doi.org/10.1023/B:BITN.0000046805.27551.3b}.

\bibitem{MoretNovati19}
{\sc I.~Moret and P.~Novati}, {\em Krylov subspace methods for functions of
  fractional differential operators}, Math. Comp., 88 (2019), pp.~293--312,
  \url{https://doi.org/10.1090/mcom/3332}.

\bibitem{MateosRiascos14}
{\sc A.~P. Riascos and J.~L. Mateos}, {\em Fractional dynamics on networks:
  {E}mergence of anomalous diffusion and {L}\'evy flights}, Phys. Rev. E, 90
  (2014), 032809 (7~pages), \url{https://doi.org/10.1103/PhysRevE.90.032809}.

\bibitem{Ruhe94}
{\sc A.~Ruhe}, {\em Rational {K}rylov algorithms for nonsymmetric eigenvalue
  problems}, in Recent advances in iterative methods, Springer, 1994,
  pp.~149--164.

\bibitem{SSV-BernsteinFunctions}
{\sc R.~L. Schilling, R.~Song, and Z.~Vondra\v{c}ek}, {\em Bernstein
  functions}, vol.~37 of De Gruyter Studies in Mathematics, Walter de Gruyter
  \& Co., Berlin, second~ed., 2012,
  \url{https://doi.org/10.1515/9783110269338}.
\newblock Theory and applications.

\bibitem{ShankSimoncini13}
{\sc S.~D. Shank and V.~Simoncini}, {\em Krylov subspace methods for
  large-scale constrained {S}ylvester equations}, SIAM J. Matrix Anal. Appl.,
  34 (2013), pp.~1448--1463, \url{https://doi.org/10.1137/130908804}.

\bibitem{SimonciniSzyld06}
{\sc V.~Simoncini and D.~B. Szyld}, {\em Recent computational developments in
  {K}rylov subspace methods for linear systems}, Numer. Linear Algebra Appl.,
  14 (2007), pp.~1--59, \url{https://doi.org/10.1002/nla.499}.

\bibitem{VDEHochbruck06}
{\sc J.~van~den Eshof and M.~Hochbruck}, {\em Preconditioning {L}anczos
  approximations to the matrix exponential}, SIAM J. Sci. Comput., 27 (2006),
  pp.~1438--1457, \url{https://doi.org/10.1137/040605461}.

\end{thebibliography}
\end{document}